\newtheorem{theorem}{Theorem}[section]
\newtheorem{lemma}[theorem]{Lemma}
\newtheorem{corollary}[theorem]{Corollary}
\theoremstyle{definition}   
\newtheorem{definition}{Definition}
\newtheorem{notation}{Notations}
\newtheorem{example}[theorem]{Example}
\theoremstyle{remark}
\newtheorem{remark}[theorem]{Remark}
\newtheorem{conjecture}[theorem]{Conjecture}
\numberwithin{equation}{section}
\title[Numerical Semigroups with unique Ap\'{e}ry expansions II]
{Numerical Semigroups with unique Ap\'{e}ry expansions II}
\author{
Joydip Saha
\and
Gaurab Tripathi
}
\date{}
\address{\small \rm  Stat-Math Unit, Indian Statistical Institute, 203 B.T. Road, Kolkata 700108.} 
\email{saha.joydip56@gmail.com}
\thanks{The first author acknowledges the receipt of post-doctoral fellowship from 
NBHM, Government of India.}
\address{\small \rm  Department of Mathematics, St. Xaviers' College(Autonomous), 30,Mother Teresa Sarani, Kolkata 700016, INDIA.}
\email{gaurabtripathi@sxccal.edu}
\date{}
\subjclass[2020]{Primary 13F70, 13F65, 13D02.}
\keywords{Numerical semigroups, monomial curves, Ap\'{e}ry set, Frobenius number, 
pseudo Frobenius number, type, syzygies, tangent cone}
\begin{document}
\begin{abstract}
In this paper, we carry out a fairly comprehensive 
study of  special classes of numerical semigroups, and their tangent cones, 
generated by the sequence of partial 
sums of an arithmetic progression , 
in embedding dimension $5$. These classes have unique expansions 
of the Ap\'{e}ry set elements. 
\end{abstract}

\maketitle

\section{Introduction}

The set $\Gamma$, subset of the set of nonnegative integers 
$\mathbb{N}$, is called a \textit{numerical semigroup} if it is 
closed under addition, contains zero and generates $\mathbb{Z}$ as 
a group. Every numerical semigroup $\Gamma$ satisfies the following two 
fundamental properties (see \cite{rs}): The complement $\mathbb{N}\setminus \Gamma$ is 
finite and $\Gamma$ has a unique minimal system of generators 
$a_{1} < \cdots < a_{n}$. The greatest integer not belonging to $\Gamma$, 
usually denoted by $F(\Gamma)$ is called the \textit{Frobenius number} 
of $\Gamma$. The integers $a_{1}$ and $n$, denoted by $m(\Gamma)$ and 
$e(\Gamma)$ respectively are known as the \textit{multiplicity} and the 
\textit{embedding dimension} of the semigroup $\Gamma$. The 
\textit{Ap\'{e}ry set} of $\Gamma$ with respect to a non-zero $a\in \Gamma$ is 
defined to be the set $\rm{Ap}(\Gamma,a)=\{s\in \Gamma\mid s-a\notin \Gamma\}$. 
 Given positive 
integers $a_{1} < \cdots < a_{n}$, every numerical semigroup ring 
$k[\Gamma] = k[t^{a_{1}}, \ldots , t^{a_{n}}]$ is the coordinate 
ring of an affine monomial curve given by the monomial parametrization 
$\nu : k[x_{1}, \ldots, x_{n}]\longrightarrow k[t]$, such that 
$\nu(x_{i}) = t^{a_{i}}$, $1\leq i\leq p$. The ideal $\ker(\nu)=\mathfrak{p}$ 
is the defining ideal of the parametrized monomial curve, which is graded 
with respect to the weighted gradation. 
\medskip

In this paper we continue the study of the numerical semigroup $\Gamma_{m}$ generated 
by the positive integers $s_{1},\ldots, s_{m}$, where $d>0$ and $a>0$ are integers with 
$\gcd(a,d)=1$, and $s_{n}=\frac{n}{2}[2a+(n-1)d]$, for $1\leq n\leq m$. For $m=4$ a details study has been done in \cite{pss1}.  It is known that uniqueness of 
representations of the Ap\'{e}ry set elements of a numerical semigroup is 
actually quite helpful to study tangent cone of the numerical semigroup; see \cite{rs1}, \cite{mss}. In general explicit writing of Ap\'{e}ry set of $\Gamma_{m}$ is very difficult. We show that for $m=5$ these classes has unique representation of elements of Ap\'{e}ry set w.r.t $a$ and study tangent cone of $\Gamma_{5}$. We also find  type, Frobenious number of $\Gamma_{5}$ and minimal generating set of the defining ideal. We conjectured that the elements Ap\'{e}ry set $\mathrm{AP}(\Gamma_{m},a)$ for $m\geq 6$ has always unique expansions see conjecture \ref{uniquecon} and we guess  $\mathrm{AP}(\Gamma_{6},a)$, see  conjecture \ref{apery6con}.
\medskip

\section{Ap\'{e}ry set of $\Gamma_{5}$}\label{cow}
We consider the numerical semigroup $ \Gamma_{m}$ generated 
by the positive integers $s_{1},\ldots, s_{m}$, where $d>0$ and $a>0$ are integers with 
$\gcd(a,d)=1$, and $s_{n}=\frac{n}{2}[2a+(n-1)d]$, for $1\leq n\leq m$. We denote the semigroup ring by $k[\Gamma_{m}]$ and the defining ideal 
by $\mathfrak{p}_{m}$.  
\medskip

\begin{theorem}\label{min4}
Let $a,d>0$ be integers with $\gcd(a,d)=1$. Let 
$s_{n}=\frac{n}{2}[2a+(n-1)d]$, for $1\leq n\leq 5$. The set 
$T_{5}=\{s_{1},\ldots, s_{5}\}$ is a minimal generating 
set for the numerical semigroup $\Gamma_{5} = \langle s_{1},\ldots, s_{5} \rangle$ if and only if $a\geq 11$.
\end{theorem}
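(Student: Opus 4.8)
The plan is to use the basic fact that $T_5$ is a minimal system of generators for $\Gamma_5$ if and only if no $s_n$ is a non-negative integer combination of the other four generators. Since $s_1<s_2<\cdots<s_5$, in any such combination the generators of index larger than $n$ must appear with coefficient $0$, so everything reduces to deciding, for each $n\in\{2,3,4,5\}$, whether $s_n=\sum_{j=1}^{n-1}c_js_j$ is solvable in non-negative integers $c_j$. Writing $s_i=ia+\binom{i}{2}d$ and comparing the $a$-parts with the $d$-parts, such a relation is equivalent to
\[
(n-P)\,a=\Bigl(Q-\binom{n}{2}\Bigr)\,d,\qquad\text{where } P=\sum_{j<n}jc_j,\ \ Q=\sum_{j<n}\binom{j}{2}c_j .
\]
Because $\gcd(a,d)=1$, this forces $a\mid \binom{n}{2}-Q$, and that one divisibility is what I would base both directions on.

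For the ``if'' direction, assume $a\ge 11$ and that a relation as above exists for some $n$. If $Q<\binom{n}{2}$, then $\binom{n}{2}-Q$ is a positive integer at most $\binom{5}{2}=10<a$, so it cannot be divisible by $a$ — a contradiction. If $Q\ge\binom{n}{2}$ (which already forces $n\ge 3$, since $Q=0$ when $n=2$), I would use the elementary inequality $\binom{j}{2}\le\frac{(n-2)j}{2}$ for $1\le j\le n-1$ to get $Q\le\frac{n-2}{2}P$, hence $P\ge\frac{2Q}{n-2}\ge\frac{n(n-1)}{n-2}>n$; on the other hand $Q\ge\binom{n}{2}$ makes the right-hand side of the displayed identity non-negative, which forces $P\le n$. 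These cannot both hold, so no $s_n$ is redundant and $T_5$ is minimal.

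For the ``only if'' direction, assume $a\le 10$; I would produce an explicit redundancy. Taking $c_2=\binom{n}{2}-a$, $c_1=d+2a+2n-n^2$, and all other $c_j=0$ gives, on substitution, the identity $s_n=c_1s_1+c_2s_2$. One then checks that both coefficients are non-negative in the windows $n=3$, $1\le a\le 3$; $n=4$, $4\le a\le 6$; and $n=5$, $7\le a\le 10$ — in each window $c_2=\binom{n}{2}-a\ge 0$ by the range of $a$, and $c_1\ge 0$ reduces to $d\ge 1$, which holds by hypothesis. Since these windows exhaust all $a\le 10$, in each case the relevant $s_n$ lies in $\langle s_1,s_2\rangle$ and $T_5$ fails to be minimal.

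The routine ingredients are the two coefficient computations and the verification of $\binom{j}{2}\le\frac{(n-2)j}{2}$; the only real content is the dichotomy in the ``if'' direction, where the divisibility $a\mid\binom{n}{2}-Q$ disposes of the low-$Q$ relations while the convexity bound $Q\le\frac{n-2}{2}P$ disposes of the high-$Q$ ones, the two regimes meeting precisely at $a=\binom{5}{2}+1=11$. I anticipate no serious obstacle beyond keeping the cases straight; should the convexity step feel unmotivated, one can instead simply enumerate the finitely many tuples $(c_1,\ldots,c_{n-1})$ with $P\le n\le 5$ and test each against the divisibility constraint.
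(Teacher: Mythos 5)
Your proposal is correct, but it takes a genuinely different route from the paper. The paper leans on the earlier result (cited from the $m=4$ paper) that $\{s_1,\ldots,s_4\}$ is minimal iff $a\ge 7$, so for $a\ge 11$ it only has to rule out $5a+10d\in\Gamma_4$, which it does by the same kind of sign-and-divisibility analysis you use, but only for the single generator $s_5$; the converse direction ($a\le 10$ forces non-minimality, in particular $s_5\in\Gamma_4$ when $7\le a\le 10$) is not spelled out there. You instead treat all generators uniformly and self-containedly: the reduction to $s_n=\sum_{j<n}c_js_j$ with larger-indexed generators excluded is sound, the identity $(n-P)a=\bigl(Q-\tbinom{n}{2}\bigr)d$ is correct, the low-$Q$ regime is killed by $a\mid\tbinom{n}{2}-Q$ with $0<\tbinom{n}{2}-Q\le 10<a$, and the high-$Q$ regime is killed unconditionally by the bound $Q\le\frac{n-2}{2}P$ versus $P\le n$ (I checked the extremal tuples for $n=3,4,5$; the bound is never tight enough to allow $Q\ge\tbinom{n}{2}$ with $P\le n$). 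Your explicit relations $s_n=(d+2a+2n-n^2)s_1+\bigl(\tbinom{n}{2}-a\bigr)s_2$ on the windows $a\le 3$, $4\le a\le 6$, $7\le a\le 10$ do verify (I checked the algebra and the non-negativity at the worst cases $a=1$ and $a=7$, where $c_1=d-1\ge 0$), so your argument actually proves both implications, whereas the paper's written proof covers only the forward one and delegates the rest to the cited $m=4$ theorem. What your approach buys is independence from that external result and a clean structural explanation of the threshold $a=\tbinom{5}{2}+1=11$; what the paper's buys is brevity, since only the new generator $s_5$ needs to be examined.
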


\proof Suppose $a\geq 11$, it is known from \cite{pss1}, theorem $2.1$, that $T_{4}=\{s_{1},\ldots, s_{4}\}$ is a minimal generating set for the numerical semigroup $\Gamma_{4}$ if and only if $a\geq 7$.  Therefore we only need to check that $5a+10d\notin \Gamma_{4}$.
\smallskip

Suppose $$x_{1}a+x_{2}(2a+d)+x_{3}(3a+3d)+x_{4}(4a+6d)=5a+10d.$$ Simplifying we get, $a(5-x_{1}-2x_{2}-3x_{3}-4x_{4})=d(x_{2}+3x_{3}+6x_{4}-10)$. We have $gcd(a,d)=1$, therefore $ a|(x_{2}+3x_{3}+6x_{4}-10)$. Again $a\geq 11$ implies $(x_{2}+3x_{3}+6x_{4}-10)\geq 0 $, as R.H.S. of the above expression is positive, we get $ x_{1}+2x_{2}+3x_{3}+4x_{4}\leq 5$. Therefore $x_{2}\leq 2,x_{3},x_{4}\leq 1.$ This shows that $0\leq x_{2}+3x_{3}+6x_{4}-10\leq 2+3+6-10=1$. However this is true only if $x_{2}=2,x_{3}=1,x_{4}=1.$ But then  $x_{1}a+x_2(2a+d)+x_3(3a+3d)+x_4(4a+6d)=x_{1}a+11a+11d>5a+10d$ gives a contradiction. \qed
\medskip

We need a technical lemma, to finding Ap\'{e}ry set of $\Gamma_{5}$.
\medskip

\begin{lemma}\label{equals}
Let $a_{1},a_{2},a_{3},a_{4}$ be integers not all zero, such that $a_{1}\geq -2,a_{2}\geq -1,a_{4}\geq 0$ and $a_{1}+3a_{2}+6a_{3}=10a_{4}$. Then $4a_{1}+3a_{2}+5a_{4}\leq 0$ if and only if $(a_{1},a_{2},a_{3},a_{4})=(-2,0,2,1)$.
\end{lemma}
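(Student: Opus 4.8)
The plan is to dispatch the ``if'' direction by a direct substitution, and the ``only if'' direction by a finite check that is kept short by reducing the relation $a_1+3a_2+6a_3=10a_4$ modulo $2$ and modulo $3$.

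For the ``if'' direction, one simply observes that $(a_1,a_2,a_3,a_4)=(-2,0,2,1)$ is not the zero tuple, satisfies $a_1=-2\ge-2$, $a_2=0\ge-1$, $a_4=1\ge0$, obeys $a_1+3a_2+6a_3=-2+12=10=10a_4$, and gives $4a_1+3a_2+5a_4=-8+5=-3\le0$.

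For the ``only if'' direction, suppose $4a_1+3a_2+5a_4\le0$. Since $a_4\ge0$, this yields $4a_1+3a_2\le0$; together with $a_2\ge-1$ this forces $a_1\le0$, so $a_1\in\{-2,-1,0\}$, and together with $a_1\ge-2$ it forces $a_2\le2$, so $a_2\in\{-1,0,1,2\}$; finally $5a_4\le-4a_1-3a_2\le8+3=11$ gives $a_4\in\{0,1,2\}$. Now reduce the relation: modulo $2$ it reads $a_1\equiv a_2\pmod2$, and modulo $3$ it reads $a_1\equiv a_4\pmod3$. I then split into three cases according to $a_1$. If $a_1=-2$, then $a_2$ is even, so $a_2\in\{0,2\}$, and $a_4\equiv1\pmod3$, so $a_4=1$; feeding this back into $4a_1+3a_2+5a_4\le0$ forces $a_2=0$, and the relation then gives $6a_3=10a_4-a_1-3a_2=12$, i.e.\ $a_3=2$, yielding exactly $(-2,0,2,1)$. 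If $a_1=-1$, then $a_2\in\{-1,1\}$ and $a_4=2$, but then $4a_1+3a_2+5a_4=3a_2+6>0$, a contradiction. If $a_1=0$, then $a_2\in\{0,2\}$ and $a_4=0$, and the inequality forces $a_2=0$, so $a_3=0$ and $(a_1,a_2,a_3,a_4)=(0,0,0,0)$, which is excluded by the hypothesis that the tuple is not all zero. Hence $(-2,0,2,1)$ is the only possibility.

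There is no genuine obstacle; the argument is elementary. The only points requiring attention are to extract the three one-variable bounds in the correct order (each uses the previous one), and to remember to invoke the ``not all zero'' hypothesis in order to discard the trivial solution $(0,0,0,0)$ that survives in the case $a_1=0$.
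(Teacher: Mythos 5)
Your proof is correct and follows essentially the same route as the paper: an elementary finite case check driven by the relation reduced mod $2$ (parity of $a_1+a_2$) and mod $3$, combined with the sign constraints to bound the variables; the only difference is bookkeeping, since you case on $a_1\in\{-2,-1,0\}$ after bounding $a_1,a_2,a_4$, whereas the paper first uses parity to get $a_4\le 1$ and cases on $a_4$. Both arguments correctly isolate $(-2,0,2,1)$ and both need the not-all-zero hypothesis to discard the trivial solution $(0,0,0,0)$, which you handle explicitly.
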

\proof We consider the equations,
\begin{eqnarray}
a_{1}+3a_{2}+6a_{3}=10a_{4}\label{1}
\end{eqnarray}
\begin{eqnarray}
4a_1+3a_2+5a_4\leq 0\label{2}
\end{eqnarray}

 From \ref{1} we get $a_{1}+a_{2}$ is even. So either $a_{2}\geq 0$ or $a_{1}\geq -1.$ Hence, $ 4a_{1}+3a_{2}\geq - 8$. Therefore $4a_{1}+3a_{2}+5a_{4}\leq 0$ implies $ a_{4}\leq 1$. We consider the following cases.
\begin{itemize}
\item \textbf{Case:1} Suppose $a_{4}=1.$ Now $a_{1}=-2$ implies $4a_{1}+3a_{2}+5a_{4}=3(a_{2}-1)>0$ for $a_2\neq 0 $ (as $2\mid a_1+a_2$). from the equation \ref{2} we get $a_{2}=0$. Therefore $ a_4=1,a_1=-2,a_2=0$ and this implies $ a_{3}=2$, which is in fact our solution. Again $a_{1} \geq -1$ implies $4a_{1}+5a_{4}\geq 1.$ Hence from the equation \ref{2} we get, $a_{2}<0$, therefore $a_{2}=-1$ and $ 6a_{3}=14$ from the equation \ref{1}. Hence we arrive at a contradiction.
\medskip

\item \textbf{Case:2} If $a_{4}=0$, then $a_1+3a_2+6a_3=0$. Therefore $3|a_{1}$ and we have $a_1\geq 0.$  From the equation \ref{2} we get $4a_{1}+3a_{2}<0.$ Now $a_{1}\geq 0$ implies $ a_{2}<0$, hence  $a_{2}=-1.$ But $2|a_1+a_2$ therefore $ a_{1}\geq 1$, hence $4a_{1}+3a_{2}>1>0$, again a contradiction.  
\end{itemize}
\qed
\medskip
\begin{notation}
Next we consider the following notations, for $n\in \mathbb{N}$, 
\begin{itemize}
\item $n=10q_{3n}+r_{3n}$, $0\leq r_{3n}\leq 9$,
\item  $r_{3n}=6q_{2n}+r_{2n}$, $0\leq r_{2n}\leq 5$,
\item  $r_{2n}=3q_{1n}+r_{1n}$, $0\leq r_{1n}\leq 2$. 
\item 
\begin{align*}
\mu(n)&=2r_{1n}+3q_{1n}+4q_{2n}+5q_{3n},\quad\mathrm{if}\,\, r_{1n}\neq 2 \,\,\mathrm{or}\,\, (r_{1n},q_{3n})=(2,0) \\
      &=2r_{1n}+3q_{1n}+4q_{2n}+5q_{3n}-1,\quad\mathrm{if}\,\, r_{1n}= 2 \,\,\mathrm{and}\,\, q_{3}>0\\ 
\end{align*}
\item $\phi(n)=\mu(n)a+nd$ 
\item $\omega(n)=\phi(n)-a$
\end{itemize}
 \end{notation}
 \begin{remark}\label{bound}
 We note that $0\leq q_{in}\leq 1$ for $i=1,2$ and $(q_{2n},q_{3n},q_{1n})\neq (1,1,1)$ as $0\leq r_{3n}\leq 9$. 
 \end{remark}
\begin{lemma}\label{guide}
  The set $S=\{\phi(n)\mid n=1,2,\cdots,a-1\,\,\rm{and}\,\, r_{1n}\neq 0\}\cup \{0\}$ is a subset of the semigroup $\Gamma_{5}$.
 \end{lemma}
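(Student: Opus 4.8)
The plan is to produce, for every $n$ appearing in $S$, an explicit expression of $\phi(n)$ as a nonnegative $\mathbb{N}$-linear combination of $s_1,\dots,s_5$; the element $0$ lies in $\Gamma_5$ by definition, so it can be set aside. The starting point is bookkeeping on the generators: since $s_j=\frac{j}{2}[2a+(j-1)d]=ja+\frac{j(j-1)}{2}d$, the generators $s_1,\dots,s_5$ carry $a$-coefficients $1,2,3,4,5$ and $d$-coefficients $0,1,3,6,10$; and unwinding the three nested divisions that define $q_{1n},q_{2n},q_{3n},r_{1n}$ gives $n=r_{1n}+3q_{1n}+6q_{2n}+10q_{3n}$. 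Hence the combination $C(n):=r_{1n}s_2+q_{1n}s_3+q_{2n}s_4+q_{3n}s_5$ has $d$-coefficient exactly $n$ and $a$-coefficient exactly $2r_{1n}+3q_{1n}+4q_{2n}+5q_{3n}$, i.e. $C(n)=\bigl(2r_{1n}+3q_{1n}+4q_{2n}+5q_{3n}\bigr)a+nd$.

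In the generic branch of the definition of $\mu$ --- when $r_{1n}\neq 2$, or $(r_{1n},q_{3n})=(2,0)$ --- one has $\mu(n)=2r_{1n}+3q_{1n}+4q_{2n}+5q_{3n}$, so $\phi(n)=\mu(n)a+nd=C(n)$, which lies in $\Gamma_5$ because all four coefficients are nonnegative.

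The only case needing an extra idea is the corrected branch $r_{1n}=2$, $q_{3n}>0$, where $\mu(n)$ equals the quantity above minus $1$, so that $\phi(n)=C(n)-a=C(n)-s_1$. Here I would use the generator relation $2s_2+s_5-s_1=(4a+2d)+(5a+10d)-a=8a+12d=2s_4$: since $r_{1n}=2$ and $q_{3n}\ge 1$, this turns $\phi(n)=2s_2+q_{1n}s_3+q_{2n}s_4+q_{3n}s_5-s_1$ into $q_{1n}s_3+(q_{2n}+2)s_4+(q_{3n}-1)s_5$, again a nonnegative combination of generators; comparing $a$- and $d$-coefficients confirms this equals $\phi(n)$, and the lemma follows.

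I do not expect a genuine obstacle: the whole argument is bookkeeping, the only subtlety being that the $-1$ correction in $\mu$ is compensated exactly by trading $2s_2+s_5$ for the equal-value expression $2s_4+s_1$ --- which is legitimate precisely because the corrected branch forces an $s_5$ to be present, i.e. $q_{3n}\ge 1$. Note that neither the bound $n\le a-1$ nor the hypothesis $r_{1n}\neq 0$ is needed for this containment; those restrictions will only matter in the subsequent results (the reverse inclusion and the explicit description of the Ap\'{e}ry set).
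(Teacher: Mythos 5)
Your proof is correct and follows essentially the same route as the paper: decompose $n=r_{1n}+3q_{1n}+6q_{2n}+10q_{3n}$ and read off $\phi(n)=r_{1n}s_2+q_{1n}s_3+q_{2n}s_4+q_{3n}s_5$ in the generic branch of the definition of $\mu$. In fact your write-up is the more complete one: the paper's proof of this lemma records only that generic expansion, which is off by $a$ in the branch $r_{1n}=2$, $q_{3n}>0$, and the compensating identity you use, $2s_2+s_5=s_1+2s_4$, i.e. $\phi(n)=q_{1n}s_3+(q_{2n}+2)s_4+(q_{3n}-1)s_5$, appears in the paper only later, in the proof of the Ap\'{e}ry set theorem; your observation that neither $n\le a-1$ nor $r_{1n}\neq 0$ is needed for the containment is also accurate.
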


\proof
We have, $$n=10q_{3n}+r_{3n}=10q_{3n}+6q_{2n}+r_{2n}=10q_{3n}+6q_{2n}+3q_{1n}+r_{1n}.$$ Therefore $$\mu(n)a+nd= r_{1n}(2a+d)+q_{1n}(3a+3d)+q_{2n}(4a+6d)+q_{3n}(5a+10d)\in \Gamma_{5}.$$  complete residue modulo $a$.  \qed

\begin{lemma}\label{penul}
 The set $\{\phi(n)|n=1,2,\cdots,a-1\,\rm{and}\, r_{1n}\neq 0\}\cup{0}$ is a subset of the Ap\'{e}ry set. 
\end{lemma}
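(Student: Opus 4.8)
The plan is to use the description $\mathrm{Ap}(\Gamma_5,a)=\{s\in\Gamma_5\mid s-a\notin\Gamma_5\}$. Since Lemma~\ref{guide} already gives $\phi(n)\in\Gamma_5$ for each $n$ with $r_{1n}\neq 0$, it suffices to show $\omega(n)=\phi(n)-a=(\mu(n)-1)a+nd\notin\Gamma_5$. So I would assume for contradiction that $\omega(n)=\sum_{i=1}^{5}x_is_i$ with all $x_i\in\mathbb{N}$, write each $s_i$ in terms of $a$ and $d$, and put $N=x_2+3x_3+6x_4+10x_5$ and $M=x_1+2x_2+3x_3+4x_4+5x_5$, so $\omega(n)=Ma+Nd$. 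Because $\gcd(a,d)=1$, comparing with $\omega(n)=(\mu(n)-1)a+nd$ forces $N\equiv n\pmod a$; as $N\geq 0$ and $0\leq n\leq a-1$ this means $N=n+ka$ for some integer $k\geq 0$, and then $M=\mu(n)-1-kd$.

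Next I would rule out $k\geq 1$ by a crude estimate. From $2M-N=2x_1+3x_2+3x_3+2x_4\geq 0$ one gets $n+ka=N\leq 2M=2\mu(n)-2-2kd$, hence $n+k(a+2d)+2\leq 2\mu(n)$. On the other hand $\mu(n)\leq 2r_{1n}+3q_{1n}+4q_{2n}+5q_{3n}$ and $n=r_{1n}+3q_{1n}+6q_{2n}+10q_{3n}$, so $2\mu(n)-n\leq 3r_{1n}+3q_{1n}+2q_{2n}\leq 11$ by Remark~\ref{bound}. Together these give $k(a+2d)\leq 9$, impossible for $k\geq 1$ since $a\geq 11$ by Theorem~\ref{min4}.

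Hence $k=0$, i.e.\ $x_2+3x_3+6x_4+10x_5=n$ and (using $x_1\geq 0$) $2x_2+3x_3+4x_4+5x_5\leq M=\mu(n)-1$; this is where Lemma~\ref{equals} is meant to be used. From $10x_5\leq n=10q_{3n}+r_{3n}$ with $r_{3n}\leq 9$ I get $x_5\leq q_{3n}$, so $b:=q_{3n}-x_5\geq 0$; set also $a_1=x_2-r_{1n}$, $a_2=x_3-q_{1n}$, $a_3=x_4-q_{2n}$. Then $a_1\geq -2$ and $a_2\geq -1$ by Remark~\ref{bound}, the $d$-equation reads $a_1+3a_2+6a_3=10b$, and substituting the $x_i$ in terms of $a_1,a_2,a_3,b$ (together with $\mu(n)\leq 2r_{1n}+3q_{1n}+4q_{2n}+5q_{3n}$) turns $2x_2+3x_3+4x_4+5x_5\leq\mu(n)-1$ into $2a_1+3a_2+4a_3-5b\leq -1$. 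Subtracting $\tfrac{2}{3}$ times the relation $a_1+3a_2+6a_3-10b=0$ eliminates $a_3$ and gives $4a_1+3a_2+5b\leq -3<0$. The quadruple $(a_1,a_2,a_3,b)$ is not all zero (otherwise this last inequality says $0\leq -3$), so Lemma~\ref{equals} forces $(a_1,a_2,a_3,b)=(-2,0,2,1)$. Then $x_2=r_{1n}-2\geq 0$ gives $r_{1n}=2$ and $x_5=q_{3n}-1\geq 0$ gives $q_{3n}\geq 1$ --- exactly the branch in which $\mu(n)=2r_{1n}+3q_{1n}+4q_{2n}+5q_{3n}-1$ --- and substituting back yields $2x_2+3x_3+4x_4+5x_5=3q_{1n}+4(q_{2n}+2)+5(q_{3n}-1)=\mu(n)$, contradicting $2x_2+3x_3+4x_4+5x_5\leq\mu(n)-1$. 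Thus $\omega(n)\notin\Gamma_5$, so $\phi(n)\in\mathrm{Ap}(\Gamma_5,a)$.

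I expect the main obstacle to be the bookkeeping in the last paragraph: one must introduce the auxiliary quadruple with precisely the sign conventions that fit the hypotheses $a_1\geq -2$, $a_2\geq -1$, $a_4\geq 0$ of Lemma~\ref{equals} --- which is why extracting the bound $x_5\leq q_{3n}$, available only once $k=0$ is known, is the pivotal preparatory step --- and then notice that the single exceptional solution allowed by Lemma~\ref{equals} is exactly the configuration already excluded by the two-case definition of $\mu(n)$. The case $k\geq 1$ is just a coarse inequality, but it genuinely needs $a$ to be large.
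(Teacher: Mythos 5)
Your proof is correct and takes essentially the same route as the paper: reduce the $d$-coefficient modulo $a$, split into the cases $k\geq 1$ and $k=0$, kill $k\geq 1$ by a crude size estimate, and for $k=0$ pass to the shifted variables $a_1=x_2-r_{1n}$, $a_2=x_3-q_{1n}$, $a_3=x_4-q_{2n}$, $b=q_{3n}-x_5\geq 0$ so that Lemma \ref{equals} pins down the single exceptional quadruple $(-2,0,2,1)$, which is then excluded by the two-case definition of $\mu(n)$. Your $k\geq 1$ bound ($k(a+2d)\leq 9$) is marginally cleaner than the paper's ($3a_1+3a_2+2a_3\geq -11>-ka$, which as written needs $a>12$), and you spell out the exceptional branch $r_{1n}=2$, $q_{3n}>0$ more explicitly, but the substance of the argument is identical.
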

\proof
Let $x_{0}(2a+d)+x_{1}(2a+3d)+x_2(4a+6d)+x_3(5a+10d)=m'a+nd$ be an element of the Ap\'{e}ry Set. Clearly, we take the coefficent of $a$ as $0$ since we are calculating w.r.t $a.$ Now we take $n$ to be such that $1\leq n\leq a-1$ and thus making it unique. Hence we get, $x_{0}+3x_{1}+6x_{2}+10x_{3}=n+ka$ for some $k\geq 0.$ Hence we get  $(x_{0}-r_1)+3(x_{1}-q_1)+6(x_{2}-q_2)+10(x_{3}-q_3)=ka.$ Replacing $x_0-r_1$ by $a_1$ and $x_i -q_i$ by $a_{i+1}$ for $i=1(1)3,$ we get, $a_1+3a_2+6a_3+10a_4=ka.$ Also $m'-m= 2a_1+3a_2+4a_3+5a_4+kd.$ Now we note, $r_{1}\leq 2, q_1\leq 1,q_2\leq 1$ and hence $a_1\geq -2, a_2,a_3\geq -1.$ We claim $k\geq 1\Rightarrow m'<m.$ To show this we will show $4a_1+6a_2+8a_3+10a_4>a_1+3a_2+6a_3+10a_4-ka.$ The latter is equivalent to showing, $3a_1+3a_2+2a_3>-ka.$ Now $a_1\geq 2, a_2,a_3\geq -1\Rightarrow 3a_1+3a_2+2a_3\geq -11.$ Now $-ka\leq -12k$ since we have taken $a>12.$\\
\noindent For $k=0,$ we note, $q_3$ being the quotient obtained in dividing $n$ by 10, we get, $x_3\leq q_3$ and hence $a_4\leq 0.$ Replacing $a_4$ by $-a_4$ we get $m=m'$ from Lemma \ref{equals}. The rest follows from Lemma \ref{guide} \qed

 \begin{theorem}
The Ap\'{e}ry set $\mathrm{Ap}( \Gamma_{5},a)$  of the semigroup $\Gamma_{5}$ w.r.t. element $a$ is given by $$\mathrm{Ap}( \Gamma_{5},a)=\{\phi(n) \ |\ n=1,2,\cdots,a-1\}\cup\{0\}$$ 
\end{theorem}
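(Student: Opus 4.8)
The plan is to prove the two inclusions separately, using the lemmas already established.

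First, the inclusion $\{\phi(n)\mid n=1,\dots,a-1\}\cup\{0\}\subseteq \mathrm{Ap}(\Gamma_5,a)$. The case $r_{1n}\neq 0$ is exactly Lemma \ref{penul}, so it remains to handle $n$ with $r_{1n}=0$. For such $n$ write $n=10q_{3n}+6q_{2n}+3q_{1n}$ and note that by the definition of $\mu$ (here $r_{1n}=0\neq 2$, so the first branch applies) we have $\phi(n)=q_{1n}(3a+3d)+q_{2n}(4a+6d)+q_{3n}(5a+10d)\in\Gamma_5$. To see $\phi(n)\in\mathrm{Ap}(\Gamma_5,a)$ I would run the same argument as in Lemma \ref{penul}: suppose $\phi(n)-a\in\Gamma_5$, write an expansion, pass to the differences $a_1=x_0-r_{1n}$, $a_{i+1}=x_i-q_{in}$, obtain $a_1+3a_2+6a_3+10a_4=ka$ with $a_1\geq 0$ (since $r_{1n}=0$), $a_2,a_3\geq -1$, $a_4$ unrestricted, and the coefficient-of-$a$ bookkeeping forcing $m'<m$ when $k\geq 1$ (the same estimate $3a_1+3a_2+2a_3\geq -5>-12k\geq -ka$ works, in fact with more room). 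For $k=0$ one gets $a_4\leq 0$ and applies Lemma \ref{equals} — but note the solution $(-2,0,2,1)$ of Lemma \ref{equals} has $a_1=-2<0$, so it is excluded here, forcing $m'=m$, i.e. $\phi(n)-a\notin\Gamma_5$ unless it equals $\phi(n)$ minus a genuine element, a contradiction. Actually the cleanest way: $r_{1n}=0$ means the relevant $a_1\geq 0$, so Lemma \ref{equals}'s hypotheses plus $4a_1+3a_2+5a_4\leq 0$ have no solution at all, hence $m'\geq m$ always, giving $\phi(n)\in\mathrm{Ap}(\Gamma_5,a)$.

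Second, the reverse inclusion. An Apéry set $\mathrm{Ap}(\Gamma_5,a)$ has exactly $a$ elements, one in each residue class modulo $a$, and $0$ is one of them. So it suffices to show that the $a$ elements $\phi(1),\dots,\phi(a-1),0$ are pairwise incongruent modulo $a$ — equivalently, that $\phi(n)\equiv nd\pmod a$ realizes all residues. Since $\gcd(a,d)=1$, the map $n\mapsto nd \bmod a$ is a bijection of $\{0,1,\dots,a-1\}$ onto $\mathbb Z/a\mathbb Z$, and $\phi(n)-nd=\mu(n)a\equiv 0\pmod a$, so the $\phi(n)$ together with $0$ hit all $a$ residue classes exactly once. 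Combined with the first inclusion and the fact that an Apéry set contains precisely one element per residue class, this forces equality: each $\phi(n)$ must be the unique Apéry-set representative of its class, and there are no others.

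The main obstacle — and the step requiring the most care — is the first inclusion in the case $r_{1n}=0$, because the definition of $\mu(n)$ has the two-branch form and one must be sure the branch choice ($-1$ correction only when $r_{1n}=2$ and $q_{3n}>0$) is consistent with $\phi(n)$ actually lying in $\Gamma_5$ and with the difference estimates; this is where Lemma \ref{equals} and Remark \ref{bound} (the bounds $0\leq q_{in}\leq 1$ for $i=1,2$ and $(q_{1n},q_{2n},q_{3n})\neq(1,1,1)$) get used to rule out stray solutions. The rest is essentially bookkeeping: counting residues modulo $a$ and quoting Lemma \ref{penul}. One should also double-check the hypothesis $a\geq 12$ (or $a\geq 13$) used in Lemma \ref{penul} is in force here, which it is, since Theorem \ref{min4} already requires $a\geq 11$ and the Apéry-set computation is carried out under the running assumption that $a$ is large.
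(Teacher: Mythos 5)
Your overall strategy is the same as the paper's: reduce the first inclusion to the earlier lemmas and finish by counting residue classes modulo $a$ (the residue count is a clean way to get equality, and your handling of $r_{1n}=0$ is correct -- there $a_{1}=x_{0}\geq 0$, so the exceptional solution $(-2,0,2,1)$ of Lemma \ref{equals} cannot occur and the minimality argument of Lemma \ref{penul} goes through with room to spare). The gap is that the case which genuinely needs a new ingredient is not $r_{1n}=0$ but $r_{1n}=2$ with $q_{3n}>0$, i.e.\ exactly the branch where $\mu(n)$ carries the $-1$ correction, and your proposal never touches it: you delegate all of $r_{1n}\neq 0$ to Lemma \ref{penul} on the strength of its wording. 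But the expansion actually exhibited in the proofs of Lemmas \ref{guide} and \ref{penul} is $r_{1n}(2a+d)+q_{1n}(3a+3d)+q_{2n}(4a+6d)+q_{3n}(5a+10d)$, whose $a$-coefficient is the uncorrected value $2r_{1n}+3q_{1n}+4q_{2n}+5q_{3n}$; when $r_{1n}=2$ and $q_{3n}>0$ this exhibits $\phi(n)+a$, not $\phi(n)$, as an element of $\Gamma_{5}$. So for that branch nothing you quote establishes even $\phi(n)\in\Gamma_{5}$, let alone $\phi(n)\in\mathrm{Ap}(\Gamma_{5},a)$. The missing step is the identity $\phi(n)=\mu(n)a+nd=q_{1n}(3a+3d)+(q_{2n}+2)(4a+6d)+(q_{3n}-1)(5a+10d)$, valid precisely when $r_{1n}=2$ and $q_{3n}\geq 1$; together with Lemma \ref{equals} -- whose unique exceptional solution corresponds exactly to this rewriting and shows that the corrected value is the minimum of the $a$-coefficients in that residue class -- it closes the case. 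That one identity is in fact the entire explicit content of the paper's proof of this theorem, so omitting it means omitting the only step the proof was written to supply.

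Two smaller points. You copied the misprint ``$k\geq 1\Rightarrow m'<m$''; the estimate $3a_{1}+3a_{2}+2a_{3}>-ka$ yields $m'>m$, which is the direction you in fact use, but it should be stated that way. And the running hypothesis you need for that estimate is $a\geq 12$ (the proof of Lemma \ref{penul} even says $a>12$), not merely the $a\geq 11$ of Theorem \ref{min4}, so the bound on $a$ should be carried explicitly rather than waved at.
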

\proof
We note for $r_1=2,$ $\mu(n)a+nd=q_1(3a+3d)+(q_2+2)(4a+6d)+(q_3-1)(5a+10d).$ The rest follows essentially from Lemma \ref{equals} and Lemma \ref{penul}.\qed

\begin{lemma}\label{uniqueexp}
The expressions of each element in the Ap\'{e}ry Set is unique.
\end{lemma}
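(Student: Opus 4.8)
The plan is to show that for each residue class $r$ modulo $a$, the element $\phi(n)$ with $n\equiv r$ and $1\le n\le a-1$ (or $0$ when $r=0$) admits exactly one expression $x_0(2a+d)+x_1(3a+3d)+x_2(4a+6d)+x_3(5a+10d)$ with $x_i\ge 0$. Suppose there were two such expressions for the same Ap\'ery element. Comparing the coefficients of $d$ on both sides forces $x_0+3x_1+6x_2+10x_3$ to take the same value (call it $N$) in both representations, since the coefficient of $a$ is fixed at $0$ and the $d$-part determines $n$, hence $N$, uniquely up to the ambiguity $N=n+ka$; comparing the $a$-parts (both equal to the fixed Ap\'ery value) then pins down $k$ as well. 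So both expressions satisfy $x_0+3x_1+6x_2+10x_3=N$ with the same $N$, and their difference $(a_1,a_2,a_3,a_4)=(x_0-x_0',x_1-x_1',x_2-x_2',x_3-x_3')$ satisfies $a_1+3a_2+6a_3+10a_4=0$ together with $2a_1+3a_2+4a_3+5a_4=0$ (the latter because the $a$-coefficients agree).

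The next step is to feed this into Lemma \ref{equals}. From $a_1+3a_2+6a_3+10a_4=0$ and $2a_1+3a_2+4a_3+5a_4=0$, subtracting gives $a_1 - 2a_3 - 5a_4 = 0$, i.e. $a_1 = 2a_3+5a_4$; but I want to reduce to the hypotheses of Lemma \ref{equals}, namely a relation of the form $a_1+3a_2+6a_3=10a_4$ with bounds $a_1\ge -2$, $a_2\ge -1$, $a_4\ge 0$ and the inequality $4a_1+3a_2+5a_4\le 0$. By symmetry I may assume $a_4\ge 0$ (otherwise swap the two expressions, which flips all signs). The bounds $a_1\ge -2$, $a_2\ge -1$, $a_3\ge -1$ come from $x_i'\le r_{1n}$, $q_{1n}$, $q_{2n}$ respectively — exactly the observation used in Lemma \ref{penul} that any Ap\'ery representation has $x_0\le r_{1n}\le 2$, $x_1\le q_{1n}\le 1$, $x_2\le q_{2n}\le 1$. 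Then the equality $2a_1+3a_2+4a_3+5a_4=0$ rewrites, using $a_1+3a_2+6a_3=10a_4$, into $4a_1+3a_2+5a_4\le 0$ (in fact $=0$, but $\le$ suffices), so Lemma \ref{equals} applies and yields $(a_1,a_2,a_3,a_4)=(-2,0,2,1)$ as the only possible nonzero difference.

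Finally I must rule out even this last case. If $(a_1,a_2,a_3,a_4)=(-2,0,2,1)$ then one of the two expressions, say the primed one, has $x_0'=x_0+2$, $x_1'=x_1$, $x_2'=x_2-2$, $x_3'=x_3-1$, so in the unprimed expression $x_0\le r_{1n}-2$, forcing $r_{1n}=2$ and $x_0=0$; likewise $x_2\ge 2$ which is impossible since $x_2\le q_{2n}\le 1$ for any Ap\'ery representation. (Alternatively: in the representation realizing $\phi(n)$ as given by the theorem via $\mu$, when $r_{1n}=2$ we already wrote $\mu(n)a+nd=q_1(3a+3d)+(q_2+2)(4a+6d)+(q_3-1)(5a+10d)$, and the ``other'' candidate is the one with $x_2-2,x_3-1$, which has a negative entry.) Either way the difference $(a_1,a_2,a_3,a_4)$ must be zero, so the two expressions coincide. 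The main obstacle is the bookkeeping in the first paragraph: carefully arguing that two distinct nonnegative expressions for the \emph{same} Ap\'ery element must share the value $N$ of $x_0+3x_1+6x_2+10x_3$, rather than differing by a multiple of $a$; this is where one uses that the element is in the Ap\'ery set (so its $a$-coefficient is $0$ and $1\le n\le a-1$), together with the $k\ge 1\Rightarrow m'<m$ estimate from Lemma \ref{penul}.
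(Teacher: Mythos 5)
Your skeleton is the same as the paper's: take a difference of expansions, check the hypotheses of Lemma \ref{equals}, and let that lemma force the difference to vanish; your extra care with the ambiguity $N=n+ka$ (via the $k\geq 1$ estimate of Lemma \ref{penul}) is fine. The gap is in how you verify the hypotheses $a_1\geq -2$, $a_2\geq -1$. You justify them by asserting that \emph{every} expansion of an Ap\'{e}ry element satisfies $x_0\leq r_{1n}$, $x_1\leq q_{1n}$, $x_2\leq q_{2n}$, and you attribute this to Lemma \ref{penul}. That is not what Lemma \ref{penul} uses, and it is false. The paper compares one arbitrary expansion against the digit tuple $(r_{1n},q_{1n},q_{2n},q_{3n})$, so the bounds come for free from $x_i\geq 0$ together with $r_{1n}\leq 2$, $q_{1n}\leq 1$, $q_{2n}\leq 1$ and $x_3\leq \lfloor n/10\rfloor=q_{3n}$. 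As a statement about arbitrary expansions your inequality fails outright: for $n$ with $r_{1n}=2$ and $q_{3n}>0$ (e.g.\ $n=12$) the Ap\'{e}ry element is $q_{1n}s_3+(q_{2n}+2)s_4+(q_{3n}-1)s_5$, whose $x_2$-coordinate is $q_{2n}+2$; and when $r_{1n}=0$ the claim $x_0\leq r_{1n}$ is already (a strong form of) the uniqueness being proved. Because you compare two arbitrary expansions, what you actually need are the absolute bounds $x_0\leq 2$ and $x_1\leq 1$ for any expansion of an Ap\'{e}ry element; these are true but require their own short argument (if $x_0\geq 3$ then $3(2a+d)-a=2a+(3a+3d)\in\Gamma_{5}$, so the element minus $a$ lies in $\Gamma_{5}$; if $x_1\geq 2$ then $2(3a+3d)-a=a+(4a+6d)\in\Gamma_{5}$), which neither your write-up nor Lemma \ref{penul} supplies.

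The same false bound reappears at the end, where you exclude $(a_1,a_2,a_3,a_4)=(-2,0,2,1)$ via ``$x_2\leq q_{2n}\leq 1$''. You do not need it: you have already derived $4a_1+3a_2+5a_4=0$, whereas $(-2,0,2,1)$ gives $4a_1+3a_2+5a_4=-3$, so Lemma \ref{equals} directly forces the difference to be zero; your parenthetical alternative tacitly assumes one of the two expansions is the canonical one, which is not part of your setup. Also watch the sign bookkeeping: with $a_i$ the plain differences the relation is $a_1+3a_2+6a_3+10a_4=0$, so matching the lemma requires negating the fourth coordinate and arranging $x_3'\geq x_3$; as written, ``assume $a_4\geq 0$ by swapping'' together with $x_3'=x_3-1$ mixes the two conventions. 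The cleanest repair is to do what the paper does: compare a single arbitrary expansion with the digit tuple, so that $a_1\geq -2$, $a_2\geq -1$ and $q_{3n}-x_3\geq 0$ are automatic, and then invoke Lemma \ref{equals}.
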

\proof
Suppose $x=s_0(2a+d)+s_1(3a+3d)+s_2(4a+6d)+s_3(5a+10d)$ be an element of the Ap\'{e}ry set with not all $a_{i}'s$ are zero, where $a_{i+1}=s_i-q_i$ for $i\geq 1$ and $a_{1}=s_0-r_1.$ Hence $a_1+3a_2+6a_3+10a_4=0.$ Then the result follows from Lemma \ref{equals}. \qed

\section{Fr\"{o}benious Number and Pseudo Fr\"{o}benious Numbers of $\Gamma_{5}$}

\begin{definition}
Let $\Gamma$ be a numerical semigroup, we say thet $x\in\mathbb{Z}$ is a 
\textit{pseudo-Frobenius number} if $x\notin \Gamma$ and $x+s\in \Gamma$ 
for all $s\in \Gamma\setminus \{0\}$. We denote by $\mathbf{PF}(\Gamma)$ 
and $\mathbf{F}(\Gamma)$ the set of  pseudo-Frobenius and Frobenius numbers of $\Gamma$ respectively. The cardinality of 
$\mathbf{PF}(\Gamma)$ is denoted by $t(\Gamma)$ and we call it the 
\textit{type} of $\Gamma$.
\end{definition}

Let $a,b\in \mathbb{Z}$. We define $\leq_{\Gamma}$ as $a\leq_{\Gamma} b$ 
if $b-a\in \Gamma$. This order relation defines a poset structure on 
$\mathbb{Z}$.

\begin{theorem}\label{max}
Let $\Gamma$ be a numerical semigroup and $a\in\Gamma\setminus \{0\}$. Then 
$\mathbf{PF}(\Gamma)=\{w-a\mid w\in \,\mathrm{Maximals}_{\leq_{\Gamma}}Ap(\Gamma, a)\}$. 
\end{theorem}
\proof See Proposition 8 in \cite{as}.\qed

\begin{lemma}
$\phi(5),\phi(8)$ are maximal elements of the Ap\'{e}ry set $\mathrm{Ap}(\Gamma_{5},a)$ for any $a\geq 12.$
\end{lemma}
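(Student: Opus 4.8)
The plan is to show directly that $\phi(5)$ and $\phi(8)$ are maximal in the poset $(\mathrm{Ap}(\Gamma_5,a),\leq_{\Gamma_5})$, i.e. that for no nonzero $s\in\Gamma_5$ does $\phi(5)+s$ or $\phi(8)+s$ lie in $\mathrm{Ap}(\Gamma_5,a)$. First I would compute the relevant data for $n=5$ and $n=8$ from the Notation block: for $n=5$ we have $q_{35}=0$, $r_{35}=5$, $q_{25}=0$, $r_{25}=5$, $q_{15}=1$, $r_{15}=2$, so since $(r_{15},q_{35})=(2,0)$ we are in the first case and $\mu(5)=2\cdot 2+3\cdot 1=7$, giving $\phi(5)=7a+5d$; for $n=8$ we have $q_{38}=0$, $r_{38}=8$, $q_{28}=1$, $r_{28}=2$, $q_{18}=0$, $r_{18}=2$, again $(r_{18},q_{38})=(2,0)$ so $\mu(8)=2\cdot 2+4\cdot 1=8$, giving $\phi(8)=8a+8d$. (These should be double-checked against the convention for $r_1=2$ used in the Ap\'{e}ry theorem, where $\mu(n)a+nd$ is rewritten using $(q_2+2)$ and $(q_3-1)$ only when $q_3>0$; for $n=5,8$ we have $q_3=0$ so no correction is needed.)

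Next, using the description $\mathrm{Ap}(\Gamma_5,a)=\{\phi(n)\mid 1\le n\le a-1\}\cup\{0\}$ together with the uniqueness of expansions (Lemma \ref{uniqueexp}), I would argue as follows. Suppose $\phi(5)+s\in\mathrm{Ap}(\Gamma_5,a)$ for some nonzero $s\in\Gamma_5$; write $s=c_1s_1+c_2s_2+c_3s_3+c_4s_4+c_5s_5$ with $c_i\ge 0$. Since $\phi(5)+s$ has $a$-coefficient class $5$ shifted by the $a$-coefficients coming from $s$, and since an Ap\'{e}ry element is the unique representative in its residue class modulo $a$ with no "$s_1$-in-the-$a$-direction" slack, the only way $\phi(5)+s$ can again be an Ap\'{e}ry element is if adding $s$ forces a rewriting that decreases the total $a$-multiplicity — which by the mechanism in the proof of Lemma \ref{penul} is impossible once $a$ is large. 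More concretely, I would reduce to the key arithmetic fact already isolated: writing the combined expansion in the variables $a_1,a_2,a_3,a_4$ as in Lemma \ref{penul}, the constraint $\phi(5)+s\in\mathrm{Ap}$ forces $a_1+3a_2+6a_3+10a_4=0$ with $a_1\ge -2,\ a_2\ge -1,\ a_4\ge 0$, and then Lemma \ref{equals} pins the only nontrivial solution to $(-2,0,2,1)$, which corresponds to the trivial relabeling $2(2a+d)\leftrightarrow$ nothing useful — giving $s=0$, a contradiction. The same argument runs verbatim for $\phi(8)$.

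The cleanest route, and the one I would actually write, is this: since $\phi(n)$ is the \emph{unique} element of $\mathrm{Ap}(\Gamma_5,a)$ in its residue class mod $a$, and since for any $w\in\mathrm{Ap}(\Gamma_5,a)$ and nonzero $s\in\Gamma_5$ one has $w+s\in\Gamma_5$ but $w+s\equiv \phi(n')\pmod a$ for the appropriate $n'$, maximality of $\phi(n)$ is equivalent to: for every generator $s_i$, $\phi(n)+s_i-\phi(n')\notin\Gamma_5$ where $n'\equiv n\pmod{\text{(the }d\text{-part)}}$ — but more simply, $\phi(n)$ is maximal iff $\phi(n)+s_i\notin\mathrm{Ap}(\Gamma_5,a)$ for each of the five generators $s_1,\dots,s_5$ (it suffices to check generators since $\mathrm{Ap}$ is downward-closed). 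So I would just check the five inequalities/non-memberships for $n=5$ and for $n=8$: each amounts to showing that the natural candidate expansion of $\phi(n)+s_i$ is forced to lie outside the window $1\le n'\le a-1$ of "small $d$-part," equivalently that the $a$-multiplicity $\mu$ does not drop, and this is exactly governed by Lemma \ref{equals} applied to $(a_1,a_2,a_3,a_4)$ coming from that sum.

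The main obstacle I anticipate is bookkeeping rather than conceptual: one must carefully track what happens to the $\mu$-value (including the exceptional "$-1$" correction when $r_1=2$ and $q_3>0$) when a generator $s_i$ is added to $\phi(5)$ or $\phi(8)$ and the result is re-expanded canonically — the residue $n'=n+(\text{$d$-weight of }s_i)$ may cross a multiple of $3$, $6$, or $10$, changing the $(q_1,q_2,q_3,r_1)$ decomposition, and one has to verify that in every such case $\mu(n')a+n'd < \phi(n)+s_i$ strictly (so that $\phi(n)+s_i\notin\mathrm{Ap}$), using $a\ge 12$. I expect Lemma \ref{equals}, possibly together with an auxiliary inequality of the shape "$4a_1+3a_2+5a_4>0$ unless $(a_1,a_2,a_3,a_4)=(-2,0,2,1)$," to absorb all five cases uniformly for each of $n=5$ and $n=8$, so the write-up should be short once the decompositions of $n'$ are tabulated.
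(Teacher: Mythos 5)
Your strategy is sound and genuinely different in organization from the paper's. You reduce maximality of $\phi(5)=7a+5d$ and $\phi(8)=8a+8d$ (both values correct) to finitely many explicit checks: since any $\Gamma_5$-summand of an Ap\'{e}ry element is again an Ap\'{e}ry element, it suffices to verify $\phi(n)+s_i\notin\mathrm{Ap}(\Gamma_5,a)$ for the generators $s_2,\ldots,s_5$ (the case $s_1=a$ being trivial), and each check is a comparison of the $a$-coefficient $\mu(n)+(\text{$a$-part of }s_i)$ with $\mu(n')$, where $n'$ is the representative of $n+(\text{$d$-part of }s_i)$ modulo $a$. The paper argues in the opposite direction: it assumes $\phi(5)\leq_{\Gamma_5}\phi(n)$ for some Ap\'{e}ry element $\phi(n)$, so that $(\mu(n)-7)a+(n-5)d\in\Gamma_5$, and then case-analyzes the decomposition type $(q_{2n},q_{1n},r_{1n})$ of $n$, showing $\mu(n)-7<\mu(n-5)$ in each case (only one case is written out). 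Your route buys a short concrete verification in the stable range, namely $n'\in\{6,8,11,15\}$ for $n=5$ and $n'\in\{9,11,14,18\}$ for $n=8$, where one finds $\mu(n')$ equal to $4,8,7,11$ and $7,7,10,12$ respectively, all strictly below the corresponding $a$-coefficients $9,10,11,12$ and $10,11,12,13$; but the part you defer as bookkeeping is exactly where care is needed: for $12\leq a\leq 18$ the shifted index can exceed $a-1$ and the reduction modulo $a$ must be done explicitly (it does go through, e.g.\ for $a=12$ one has $\phi(5)+s_5=12a+15d=(12+d)a+3d$, which exceeds $\phi(3)=3a+3d$ by a positive multiple of $a$). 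Two smaller corrections: these checks are direct evaluations of $\mu$ at shifted indices and are not applications of Lemma \ref{equals}, which governs equality of two expansions of one Ap\'{e}ry element (that is Lemma \ref{uniqueexp}), so your expectation that Lemma \ref{equals} absorbs the cases is misplaced though harmless; and your first, discarded reduction forcing $a_1+3a_2+6a_3+10a_4=0$ is indeed not the right framing, since adding a nonzero $s$ changes the residue class modulo $a$.
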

\proof Let $\mu(n)a+nd\in \mathrm{Ap}(\Gamma_{5},a)$  such that $\phi(5)=7a+5d\leq_{\Gamma_{5}}\mu(n)a+nd$. Hence $(\mu(n)-7)a+(n-5)d\in \Gamma_{5}.$ We have $n=10q_{3n}+6q_{2n}+3q_{1n}+r_{1n}$ and  $0\leq q_{2n},q_{1n}\leq 1,0\leq r_{1n}\leq 2.$ Suppose $(q_{2n},q_{1n},r_{1n})=(0,0,0)$ then $n=10q_{3n}$ and $\mu(n)=5q_{3n}$. Now $n-5=10(q_{3n}-1)+0+3.1+2$, therefore $\mu(n-5)=5(q_{3n}-1)+3.1+3=5q_{3n}+1\neq \mu(n)-7$. Hence $(\mu(n)-7)a+(n-5)d\notin \Gamma_{5 }$ and gives a contradiction. Similarly we can show the other cases. To show $\phi(8)$ is a maximal element of $\mathrm{Ap}(\Gamma_{5},a)$, we proceed by same argument.\qed
\begin{lemma}
Let $a\geq 20.$ Let $a=10q+r$, $0\leq r\leq 9$. Let $\mathbf{PF}(\Gamma_{5})_{r}$ be the pseudo-Frobenius numbers of the numerical semigroup $\Gamma_{5}$ then we have  $\mathbf{PF}(\Gamma_{5})_{r}=\mathcal{T}_{r}\cup \{w(5),w(8)\}$. Where 
  
\begin{itemize}
\item  $\mathcal{T}_{0}=\{w(a-i)\mid i\in \{1,2,3,5,6\}\} $

\item $\mathcal{T}_{1}=\{w(a-i)\mid i\in \{1,2,3,4,6,7\}\}$

\item $\mathcal{T}_{2}=\{w(a-i)\mid i\in \{1,3,4,5,7,8\}\}$

\item $\mathcal{T}_{3}=\{w(a-i)\mid i\in \{1,2,4,5,6,7,8,9\}\} $
\item $\mathcal{T}_{4}=\{w(a-i)\mid i\in \{1,2,3,5,6,7,9,10\}\} $
\item $\mathcal{T}_{5}=\{w(a-i)\mid i\in \{1,3,6,7,8,10\}\} $
\item $\mathcal{T}_{6}=\{w(a-i)\mid i\in \{1,2,8,9\}\}  $
\item $\mathcal{T}_{7}=\{w(a-i)\mid i\in \{1,2,3,9,10\}\}$
\item $\mathcal{T}_{8}=\{w(a-i)\mid i\in \{1,3,4,10\}\}  $
\item $\mathcal{T}_{9}= \{w(a-i)\mid i\in \{1,2,4,5\}\} $
\end{itemize}
The Frobenius number is given by $w(a-1)$ for all cases.
\end{lemma}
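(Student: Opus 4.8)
\emph{Plan.} By Theorem~\ref{max}, $\mathbf{PF}(\Gamma_5)=\{w-a\mid w\in\mathrm{Maximals}_{\leq_{\Gamma_5}}\mathrm{Ap}(\Gamma_5,a)\}$, and since $w(n)=\omega(n)=\phi(n)-a$, the whole statement reduces to deciding which $\phi(n)$, $1\le n\le a-1$, are maximal in $(\mathrm{Ap}(\Gamma_5,a),\leq_{\Gamma_5})$, and then comparing the resulting finitely many integers to locate the largest. The backbone will be the following comparison criterion: for $1\le n<m\le a-1$,
\[
\phi(n)\leq_{\Gamma_5}\phi(m)\iff \mu(m)-\mu(n)\ge\mu(m-n),
\]
and $\phi(n)<_{\Gamma_5}\phi(m)$ forces $n<m$. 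To see this, write $\phi(m)-\phi(n)=(\mu(m)-\mu(n))a+(m-n)d$; its $d$-coefficient is $m-n\ge1$, so it lies in $\Gamma_5$ only if $m>n$. By the description of $\mathrm{Ap}(\Gamma_5,a)$ together with Lemma~\ref{uniqueexp} (and the explicit decompositions in Lemma~\ref{guide} and the proof of the Apéry theorem), $\phi(k)=\mu(k)a+kd$ is the least element of $\Gamma_5$ in the residue class $kd\bmod a$ for each $1\le k\le a-1$; hence $\mu(k)$ is the minimum $a$-coefficient of an element of $\Gamma_5$ whose $d$-coefficient equals $k$, and any larger $a$-coefficient occurs too (pad with copies of $s_1=a$). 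Applying this with $k=m-n\le a-2$ gives the criterion.

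Next I would prune the candidate list. The elementary observation is that $\mu(n+10)=\mu(n)+5$ whenever $n\ge10$, and also whenever $n\in\{1,3,4,6,7,9\}$; in each of these cases $\phi(n+10)=\phi(n)+s_5$, so, since $a\ge20$ ensures $n+10\le a-1$, every such $\phi(n)$ with $n\le a-11$ fails to be maximal. Combined with $\phi(2)=\phi(5)-s_3<_{\Gamma_5}\phi(5)$ and the preceding lemma (which gives that $\phi(5)$ and $\phi(8)$ are maximal for $a\ge12$), this shows that the only possible maximal elements of the Apéry set are $\phi(5)$, $\phi(8)$, and those among $\phi(a-1),\dots,\phi(a-10)$.

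The heart of the argument is the boundary window $\{a-10,\dots,a-1\}$. Writing $a=10q+r$ with $0\le r\le9$ and $q\ge2$, one has $\mu(a-i)=5q+\tilde c(r-i)$ for $1\le i\le r$ and $\mu(a-i)=5(q-1)+\tilde c(r+10-i)$ for $r<i\le10$, where $\tilde c(k):=\mu(10q+k)-5q$ (independent of $q\ge1$) takes the values $(0,2,3,3,5,6,4,6,7,7)$ for $k=0,1,\dots,9$. Since the $5q$-term cancels in differences, the criterion turns ``$\phi(a-i)$ is maximal'' into the $q$-independent condition that $\mu(a-j)-\mu(a-i)<\mu(i-j)$ for every $1\le j<i\le10$. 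Running this finite check for each $r\in\{0,1,\dots,9\}$ against the small-value table $(\mu(1),\dots,\mu(9))=(2,4,3,5,7,4,6,8,7)$ produces exactly the index sets $\mathcal I_r$ with $\mathcal T_r=\{w(a-i)\mid i\in\mathcal I_r\}$ as listed, whence $\mathbf{PF}(\Gamma_5)_r=\mathcal T_r\cup\{w(5),w(8)\}$. For the Frobenius number one uses $\mathbf{F}(\Gamma_5)=\max\mathbf{PF}(\Gamma_5)$; since $\phi(n)=\mu(n)a+nd$ is non-decreasing in each of $\mu(n)$ and $n$, the maximum of the maximal Apéry elements is reached by combining the largest available value of $\mu$ on $\{0,\dots,a-1\}$ with the largest admissible index, and for the residue classes at hand this works out to $n=a-1$, giving $\mathbf{F}(\Gamma_5)=w(a-1)$.

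The step I expect to be the main obstacle is the residue-by-residue verification of the $\mathcal T_r$: it is finite and mechanical, but there are ten cases and in each one must run through all pairs $1\le j<i\le10$, so organising it cleanly (a table of the vectors $(\mu(a-i)-5q)_{i=1}^{10}$ for each $r$, against the $\mu(i-j)$ thresholds) is the real labour. A secondary delicate point is that the entire analysis is conducted through the integer weights $\mu$ rather than the Apéry elements directly; this is legitimate only because of the ``near-greedy $=$ minimal $a$-coefficient'' fact in the first paragraph, and in the last step one must bear in mind that ranking the $\phi(n)=\mu(n)a+nd$ is a comparison of actual integers, so care is needed to confirm that the largest $\mu$-value together with the largest index does indeed dominate all the other maximal elements.
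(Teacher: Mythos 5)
Your overall route is the same as the paper's: reduce to $\mathrm{Maximals}_{\leq_{\Gamma_5}}\mathrm{Ap}(\Gamma_5,a)$ via Theorem~\ref{max}, discard $\phi(n)$ with $n\le a-11$ using $\phi(n+10)-\phi(n)=s_5$ (except $n=2,5,8$, handled by $\phi(5)-\phi(2)=s_3$ and the preceding lemma), and then run a finite residue-by-residue check on the window $\{a-10,\dots,a-1\}$; your membership criterion $\phi(n)\leq_{\Gamma_5}\phi(m)\iff\mu(m)-\mu(n)\ge\mu(m-n)$ is correct (it is exactly the Apéry-minimality fact), your tables $\tilde c=(0,2,3,3,5,6,4,6,7,7)$ and $(\mu(1),\dots,\mu(9))=(2,4,3,5,7,4,6,8,7)$ are right, and the pairwise check does reproduce the listed $\mathcal T_r$ (I verified several residues). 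One point of rigor: your justification that $\phi(n)<_{\Gamma_5}\phi(m)$ forces $n<m$ by reading off ``the $d$-coefficient'' is not valid as stated, since an integer has no well-defined $(a,d)$-decomposition; the correct argument is to apply your own criterion in the wrap-around class, where domination with $n>m$ would require $\mu(m)\ge\mu(n)+\mu(m-n+a)+d$, and to rule this out for the relevant candidates by a size estimate (or via Lemma~\ref{uniqueexp}). This is fixable, but it is a step you must actually carry out, since the whole reduction to ``only $j<i$ need be checked'' rests on it.

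The genuine gap is the Frobenius step. ``Combining the largest available value of $\mu$ with the largest admissible index'' is not a well-defined operation when the two criteria point to different elements, and they do: for $r=1$ one has $\mu(a-1)=5q$ while $\mu(a-2)=5(q-1)+7=5q+2$, and for $r=7$ one has $\mu(a-1)=5q+4$ while $\mu(a-2)=5q+6$; in both cases $w(a-2)$ is also a pseudo-Frobenius number (it lies in $\mathcal T_1$, resp.\ $\mathcal T_7$), and $w(a-2)-w(a-1)=2a-d$, whose sign depends on $d$. Thus your argument cannot deliver the blanket conclusion $\mathbf F(\Gamma_5)=w(a-1)$, and indeed for $d<2a$ it is false in these residues (e.g.\ $(a,d)=(21,2)$: $w(a-2)=269>229=w(a-1)$, and $269\notin\Gamma_5$ since it is smaller than the Apéry element $\phi(19)=290$ of its class). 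For comparison, the paper's own proof of this part tabulates $\mu(a-i)=5(k-1)+\tilde c(10-i)$, which is the $r=0$ table only, and applies it to all residues, so it suffers from the same defect; but as a proof of the statement as given, your final step fails rather than being merely terse, and any correct treatment must split according to the sign of $2a-d$ (and hence amend the claimed answer) for $r\in\{1,7\}$.
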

\begin{proof}
We note that $\phi(n+10)- \phi(n)=5a+10d\in \Gamma_{5}.$ Hence $\phi(n+10)>_{\Gamma_{5}} \phi(n)$ for $n\geq 10$ and for $0\leq n<10$, we have $\phi(2)<_{\Gamma_{5}} \phi(4)<_{\Gamma_{5}} \phi(5).$ Also $\phi(n+10)>_{\Gamma_{5}} \phi(n)$ for $n\neq 2,5,8.$ Therefore the maximal elements are among 
$\{\phi(a-i)\mid 1\leq i\leq 10\}\cup\{\phi(5),\phi(8)\}.$ As indicated in the above lemma $\{\phi(5),\phi(8)\}$ are maximal elements of the Ap\'{e}ry Set. Now the result follows easily by inspecting each case. \\
For the Frobenius number we first note that $w(a-1)\in \mathcal{T}_{r} $ for each $0\leq r\leq 9.$  We know that Frobenius number is the maximum (w.r.t. natural order as in $\mathbb{N}$) among the pseudo Frobenius numbers, our job is done if we can show $w(a-1)$ is the maximum element of $\mathcal{T}_{r}$ for each $r.$ For this case we note that $w(a)>w(b)\Leftrightarrow \mu(a)>\mu(b).$ Now  \begin{equation}
\left\{
	\begin{array}{lll} 
                                           \mu(a-1)&=&[5(k-1)+7]a+(a-1)d\\
                                           \mu(a-2)&=&[5(k-1)+7]a+(a-2)d\\
                                           \mu(a-3)&=&[5(k-1)+6]a+(a-3)d\\
                                            \mu(a-4)&=&[5(k-1)+4]a+(a-4)d\\
                                             \mu(a-5)&=&[5(k-1)+6]a+(a-5)d\\
                                            \mu(a-6)&=&[5(k-1)+5]a+(a-6)d\\
                                           \mu(a-7)&=&[5(k-1)+3]a+(a-7)d\\
                                            \mu(a-8)&=&[5(k-1)+3]a+(a-8)d\\
                                            \mu(a-9)&=&[5(k-1)+2]a+(a-9)d\\
	\end{array}
\right.
\end{equation}
 Hence it easily follows that $w(a-1)$ is the maximum and hence the Frobenius number.
\end{proof}
\begin{lemma}

Let $12\leq a \leq 19.$ Then the pseudo-Frobenius numbers are given by \begin{equation}
\mathbf{PF}(\Gamma_{5})=\left\{
	\begin{array}{ll}
                                           \{w(9),w(10),w(5),w(8)\} & \mbox{if } a=11\\
		   		\{w(9),w(11),w(5),w(8)\} & \mbox{if } a=12\\
		   		\{w(9),w(11),w(12),w(5),w(8)\} & \mbox{if } a=13\\
		   		\{w(9),w(11),w(12),w(13),w(5),w(8)\} & \mbox{if } a=14\\
		   		\{w(9),w(12),w(14),w(5),w(8)\} & \mbox{if } a=15\\
		   		\{w(14),w(15),w(5),w(8)\} & \mbox{if } a=16\\
		   		\{w(14),w(15),w(16),w(5),w(8)\} & \mbox{if } a=17\\
		   		\{w(14),w(15),w(17),w(5),w(8)\} & \mbox{if } a=18\\
		   		\{w(14),w(15),w(17),w(18),w(5),w(8)\} & \mbox{if } a=19\\
	\end{array}
\right.
\end{equation}
The Frobenius numbers are given as \begin{equation}
\mathbf{F}(\Gamma_{5})=\left\{
	\begin{array}{ll}
                                          \{w(8)\} & \mbox{if } a=11,a>d\\
                                          \{w(9)\} & \mbox{if } a=11,d/2<a<d\\
                                          \{w(10)\} & \mbox{if } a=11,a<d/2\\
		   		\{w(8)\} & \mbox{if } a=12,a>3d\\
		   		\{w(11)\} & \mbox{if } a=12,a<3d\\
		   		\{w(12)\} & \mbox{if } a=13\\
		   		\{w(13)\} & \mbox{if } a=14\\
		   		\{w(14)\} & \mbox{if } a=15\\
		   		\{w(15)\} & \mbox{if } a=16\\
		   		\{w(15)\} & \mbox{if } a=17,2a>d\\
		   		\{w(16)\} & \mbox{if } a=17,2a<d\\
		   		\{w(17)\} & \mbox{if } a=18\\
		   		\{w(18)\} & \mbox{if } a=19\\
	\end{array}
\right.
\end{equation}

\end{lemma}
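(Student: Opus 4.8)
The plan is to reduce the statement, via Theorem~\ref{max}, to a finite computation in the poset $\bigl(\mathrm{Ap}(\Gamma_{5},a),\le_{\Gamma_{5}}\bigr)$. Since $\omega(n)=\phi(n)-a$, Theorem~\ref{max} gives $\mathbf{PF}(\Gamma_{5})=\{\omega(n)\mid \phi(n)\ \text{is a maximal element of}\ (\mathrm{Ap}(\Gamma_{5},a),\le_{\Gamma_{5}})\}$, and $\mathbf{F}(\Gamma_{5})$ is whichever of these $\omega(n)$ is largest in the natural order of $\mathbb{Z}$. Using the description $\mathrm{Ap}(\Gamma_{5},a)=\{\phi(n)\mid 1\le n\le a-1\}\cup\{0\}$ established above, the job splits into (a) listing the maximal $\phi(n)$ for each $a$ in the stated range, and (b) picking out the one of largest ordinary value.

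For (a) I would first carry over the candidate reduction used in the two preceding lemmas. From the definition of $\mu$ one has $\mu(n+10)=\mu(n)+5$ unless $r_{1n}=2$ and $q_{3n}=0$, i.e.\ unless $n\in\{2,5,8\}$; hence $\phi(n+10)-\phi(n)=5a+10d=s_{5}\in\Gamma_{5}$, so $\phi(n)<_{\Gamma_{5}}\phi(n+10)$ whenever $n\notin\{2,5,8\}$ and $n+10\le a-1$. Also $\phi(5)-\phi(2)=3a+3d=s_{3}$, so $\phi(2)$ is never maximal. Together with the lemma asserting that $\phi(5)$ and $\phi(8)$ are maximal (the same argument works for $a=11$), this confines $\mathrm{Maximals}_{\le_{\Gamma_{5}}}\mathrm{Ap}(\Gamma_{5},a)$ to $\{\phi(a-i)\mid 1\le i\le 10\}\cup\{\phi(5),\phi(8)\}$, always containing $\phi(5),\phi(8)$; and because $a\le 19$, only the finitely many values $\mu(1),\dots,\mu(18)$, tabulated once from the definition, enter the picture.

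The heart of (a) is then, for each fixed $a$, to decide for every pair of distinct candidates whether their difference lies in $\Gamma_{5}$. Such a difference is of the form $\lambda a+kd$ with $\lambda=\mu(n)-\mu(m)$ and $1\le|k|\le 9$. Writing a general element $\sum x_{i}s_{i}$ of $\Gamma_{5}$, comparing $a$- and $d$-weights, and using $\gcd(a,d)=1$, one obtains a clean criterion: for $k\ge 1$, if $\lambda<0$ then $\lambda a+kd\notin\Gamma_{5}$, while if $\lambda\ge0$ then $\lambda a+kd\in\Gamma_{5}$ iff $\lambda\ge M(k)$, where $M(k)$ is the least value of $2x_{2}+3x_{3}+4x_{4}$ over $x_{i}\ge0$ with $x_{2}+3x_{3}+6x_{4}=k$, namely $(M(1),\dots,M(9))=(2,4,3,5,7,4,6,8,7)$. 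The only loophole is a ``wrap-around'' representation with $x_{2}+3x_{3}+6x_{4}+10x_{5}=k+ta$, $t\ge1$, which forces $\lambda\ge 6+d$, hence $d\le4$, a handful of cases to be cleared directly; symmetrically, a relation $\phi(n)-\phi(m)\in\Gamma_{5}$ with $m>n$ forces $\mu(n)-\mu(m)\ge d$ and is ruled out the same way. Running these tests over the candidate list for each $a$ isolates the maximal elements, yields the displayed lists $\mathbf{PF}(\Gamma_{5})$, and shows that the answer does not depend on $d$.

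For (b), the Frobenius number is $\omega(n_{0})$ for the maximal index $n_{0}$ with $\phi(n_{0})=\mu(n_{0})a+n_{0}d$ largest. If one maximal index realizes both the biggest $\mu$-value and the biggest index this is unconditional; otherwise, for two maximal indices $m<n$ with $\mu(m)>\mu(n)$ one has $\phi(m)>\phi(n)\iff(\mu(m)-\mu(n))a>(n-m)d$, an inequality whose truth depends on the ratio $a:d$. This is exactly what produces the sub-cases ($a>d$, $d/2<a<d$, $a<d/2$; $a>3d$, $a<3d$; $2a>d$, $2a<d$) in the table for $\mathbf{F}(\Gamma_{5})$, obtained by pairwise comparison of the at most six maximal values. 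I expect the real difficulty to be organisational rather than conceptual: since $a$ is small one cannot invoke ``$a$ large'', so every membership test $\lambda a+kd\in\Gamma_{5}$ must be argued on the nose (and $\gcd(a,d)=1$ is used repeatedly to exclude accidental coincidences such as $\phi(m)=\phi(n)$), and the Frobenius case-splits for $a=11,12,17$ need care because there two pseudo-Frobenius numbers are genuinely incomparable in the natural order until $a:d$ is pinned down.
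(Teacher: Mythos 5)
Your proposal is correct and follows essentially the same route as the paper: the paper's proof is just ``results follow by considering each case,'' implicitly relying on Theorem~\ref{max}, the Ap\'{e}ry set description, the confinement of maximal candidates to $\{\phi(a-i)\mid 1\le i\le 10\}\cup\{\phi(5),\phi(8)\}$ via $\phi(n+10)-\phi(n)=s_{5}$, and pairwise comparison of $\phi$-values to split the Frobenius cases by the ratio $a:d$ --- exactly the finite verification you spell out, with your explicit membership criterion $\lambda\ge M(k)$ (plus the wrap-around and sign analysis) simply making the case check systematic. The only nitpick is that differences between candidates can have $|k|$ up to about $13$ (e.g.\ $\phi(a-1)-\phi(5)$ for $a$ near $19$), not $9$; this is harmless since those comparisons are covered by the maximality of $\phi(5),\phi(8)$ and your negative-$k$ argument.
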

	
\begin{proof}
Results follow easily by considering each case. 
\end{proof}
\section{Minimal generating set for the defining ideals}
Let us begin with the following theorem from \cite{g}, which helps us compute 
a minimal generating set for the defining ideal of a monomial curve.
\medskip

\begin{theorem}\label{gastinger}
 Let $A = k[x_{1},\ldots,x_{n}]$ be a polynomial ring, $I\subset A$ the defining
ideal of a monomial curve defined by natural numbers $a_{1},\ldots,a_{n}$, 
whose greatest common divisor is $1$.  Let $J \subset I$ be a subideal. 
Then $J = I$ if and only if $\mathrm{dim}_{k} A/\langle J + (x_{i}) \rangle =a_{i}$
for some $i$. (Note that the above conditions are also equivalent to 
$\mathrm{dim}_{k} A/\langle J + (x_{i}) \rangle =a_{i}$ for any $i$.)
\end{theorem}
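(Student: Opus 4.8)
The plan is to work with the weighted grading on $A$ given by $\deg x_j = a_j$, under which $\nu$ becomes a morphism of graded rings (with $\deg t = 1$) and the defining ideal $I = \ker\nu$ is homogeneous; in particular $A/I \cong k[\Gamma] = \bigoplus_{s\in\Gamma} k\,t^{s}$ as graded rings. Since in every application $J$ is generated by homogeneous binomials, I would carry the argument out for a \emph{homogeneous} subideal $J\subseteq I$. This homogeneity is in fact essential: already for the curve $(t^2,t^3)$ one may take $J=\big((1+x)(y^2-x^3)\big)\subsetneq (y^2-x^3)=I$, for which $A/\langle J+(x)\rangle\cong k[y]/(y^2)$ still has dimension $a_1=2$ while $J\neq I$. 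So some graded hypothesis on $J$ cannot be dropped, and I would make it explicit.

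The first step identifies the dimension exactly when $J=I$, which simultaneously yields the forward implication and the parenthetical ``for any $i$'' remark. Passing to $A/I\cong k[\Gamma]$, the image of $x_i$ is $t^{a_i}$, so $A/\langle I+(x_i)\rangle \cong k[\Gamma]/(t^{a_i}k[\Gamma])$. The monomials $t^{s}$ with $s\in\Gamma$ and $s-a_i\notin\Gamma$ form a $k$-basis of this quotient, and this index set is precisely $\mathrm{Ap}(\Gamma,a_i)$. Since $\mathbb{N}\setminus\Gamma$ is finite, each residue class modulo $a_i$ contains a unique minimal element of $\Gamma$, and these are exactly the Apéry elements; hence $\dim_k A/\langle I+(x_i)\rangle = |\mathrm{Ap}(\Gamma,a_i)| = a_i$ for every $i$.

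For the converse, suppose $\dim_k A/\langle J+(x_i)\rangle = a_i$ for some fixed $i$. Because $J\subseteq I$ gives $\langle J+(x_i)\rangle\subseteq\langle I+(x_i)\rangle$, there is a canonical surjection $A/\langle J+(x_i)\rangle \twoheadrightarrow A/\langle I+(x_i)\rangle$; both sides are $a_i$-dimensional (by hypothesis and by the previous step), so this surjection is an isomorphism and $\langle J+(x_i)\rangle=\langle I+(x_i)\rangle$. I would then set $M=I/J$, a finitely generated graded $A$-module, and show $M=x_iM$: given a homogeneous $f\in I$, the equality of ideals lets me write $f=g+x_ih$ with $g\in J$ and $h\in A$ homogeneous of degree $\deg f - a_i$; then $x_ih=f-g\in I$, and since $I$ is prime with $x_i\notin I$ we conclude $h\in I$, so $\bar f = x_i\bar h\in x_iM$.

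Finally, since $\deg x_i=a_i>0$, the relation $M=x_iM$ gives $M=\mathfrak{m}M$ for the irrelevant maximal ideal $\mathfrak{m}=(x_1,\ldots,x_n)$, and graded Nakayama forces $M=I/J=0$, i.e. $J=I$. The main obstacle is exactly this last passage from $\langle J+(x_i)\rangle=\langle I+(x_i)\rangle$ back to $J=I$: it is where the primeness of $I$ and the graded structure are indispensable, and it is precisely the point at which the homogeneity of $J$ is used.
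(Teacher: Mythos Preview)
The paper does not supply its own argument here; it simply cites Gastinger's thesis. Your write-up therefore goes well beyond what the paper offers, and it is essentially correct. The forward direction via the Ap\'{e}ry set is clean, and for the converse your chain
\[
\langle J+(x_i)\rangle=\langle I+(x_i)\rangle \ \Longrightarrow\ I/J=x_i\cdot(I/J)\ \Longrightarrow\ I/J=0
\]
is exactly the right mechanism: the primeness of $I$ lets you pull $h$ back into $I$, and graded Nakayama (using $\deg x_i=a_i>0$) finishes.

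Your remark that the statement, as phrased in the paper, requires $J$ to be homogeneous for the weighted grading is well taken, and your counterexample $J=\big((1+x)(y^2-x^3)\big)\subsetneq(y^2-x^3)$ in $k[x,y]$ with $(a_1,a_2)=(2,3)$ is valid: $A/\langle J+(x)\rangle\cong k[y]/(y^2)$ has dimension $2=a_1$ while $J\neq I$. In every application made in the paper the candidate ideal $J$ is generated by weighted-homogeneous binomials, so the hypothesis is satisfied in practice; but you are right that it is not innocuous and should be stated. The only minor polish I would suggest is to say explicitly that $I/J$ is a finitely generated \emph{$\mathbb{N}$-graded} module (since $I\subset\mathfrak{m}$), so that the minimal-degree argument behind graded Nakayama goes through without further comment.
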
  

\proof See \cite{g}.\qed
\medskip

\begin{lemma}\label{equal}
Let $A = k[x_{1},\ldots,x_{n}]$ be a polynomial ring. For a monomial ideal $J$ of $A$, we write 
the unique minimal generating set of $J$ as $G(J)$. Let $I=\langle f_{1},\ldots f_{k}\rangle$ and 
$I_{i}=\langle f_{1},\ldots,\hat{f}_{i},\ldots f_{k}\rangle$, $1\leq i\leq k$. 
Suppose that, with respect to some monomial order on $A$, $\{ \mathrm{LT}(f_{1}),\ldots \mathrm{LT}(f_{k})\} \subset G(\mathrm{LT}(I)) $ and 
$G(\mathrm{LT}(I_{i})) \subset G(\mathrm{LT}(I))\setminus \{\mathrm{LT}(f_{i})\}$ for all $1\leq i\leq k$. 
Then $I$ is minimally generated by $\{f_{1},\ldots f_{k}\}$.
\end{lemma}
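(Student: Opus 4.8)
The plan is to show that the hypotheses force $\{f_1,\dots,f_k\}$ to be a \emph{minimal} generating set of $I$ by a standard Gr\"obner-degeneration argument, exploiting the fact that a monomial ideal has a \emph{unique} minimal generating set and that passing to leading terms is compatible with taking subideals in one direction. First I would recall the basic inclusions: for any monomial order, $\mathrm{LT}(I_i)\subseteq \mathrm{LT}(I)$ since $I_i\subseteq I$, and moreover $\mathrm{LT}(I_i)+ (\mathrm{LT}(f_i))\subseteq \mathrm{LT}(I)$ because $f_i\in I$. Combined with the hypothesis $\mathrm{LT}(f_i)\in G(\mathrm{LT}(I))$ and $G(\mathrm{LT}(I_i))\subseteq G(\mathrm{LT}(I))\setminus\{\mathrm{LT}(f_i)\}$, this will let me pin down the minimal generators of $\mathrm{LT}(I)$ in terms of those of the $\mathrm{LT}(I_i)$.

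\textbf{Key steps.} The argument proceeds by contradiction: suppose $\{f_1,\dots,f_k\}$ is \emph{not} minimal, so some $f_i$ is redundant, i.e.\ $f_i\in I_i$ and hence $I=I_i$. Then $\mathrm{LT}(I)=\mathrm{LT}(I_i)$, so $G(\mathrm{LT}(I))=G(\mathrm{LT}(I_i))$. But the hypothesis says $G(\mathrm{LT}(I_i))\subseteq G(\mathrm{LT}(I))\setminus\{\mathrm{LT}(f_i)\}$, which would force $\mathrm{LT}(f_i)\notin G(\mathrm{LT}(I_i))=G(\mathrm{LT}(I))$, contradicting the other hypothesis that $\mathrm{LT}(f_i)\in G(\mathrm{LT}(I))$. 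Hence no $f_i$ is redundant, so $\{f_1,\dots,f_k\}$ is a minimal generating set. One subtlety to address carefully is that ``minimal generating set'' here should mean no proper subset generates; it suffices to rule out removing a single generator, because if any proper subset generates then in particular removing one of the remaining generators (iteratively) gives a smaller set, and at the first step some single $f_i$ becomes redundant relative to the rest — but to be safe I would phrase the contradiction directly in terms of $I=I_i$ for the specific $i$ whose removal still generates.

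\textbf{Main obstacle.} The only real point requiring care is the direction of the leading-term inclusions: it is true that $J\subseteq I$ implies $\mathrm{LT}(J)\subseteq\mathrm{LT}(I)$, but one does \emph{not} in general have $\mathrm{LT}(I_i)=\mathrm{LT}(I)$ even when $I_i=I$ is false, so the contradiction genuinely uses the equality $I=I_i$ (which does give $\mathrm{LT}(I)=\mathrm{LT}(I_i)$ since leading-term ideals depend only on the ideal, not on a chosen generating set). I expect this to be the step a careful reader will want spelled out; the rest is bookkeeping with the uniqueness of the minimal monomial generating set $G(-)$. I would also remark that the hypotheses are not vacuous: in practice one verifies $\mathrm{LT}(f_i)\in G(\mathrm{LT}(I))$ and the inclusions for $G(\mathrm{LT}(I_i))$ by a direct Gr\"obner basis computation, and Theorem~\ref{gastinger} can be invoked to certify that a candidate set of binomials actually generates $I$.
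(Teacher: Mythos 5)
Your argument is correct: if some $f_i$ were redundant then $I=I_i$, hence $\mathrm{LT}(I)=\mathrm{LT}(I_i)$ and, by uniqueness of the minimal monomial generating set, $G(\mathrm{LT}(I))=G(\mathrm{LT}(I_i))\subseteq G(\mathrm{LT}(I))\setminus\{\mathrm{LT}(f_i)\}$, contradicting $\mathrm{LT}(f_i)\in G(\mathrm{LT}(I))$; and ruling out the removal of a single generator does suffice for minimality. The paper gives no proof of this lemma (it simply refers to \cite{pss1}), and your contradiction via $I=I_i$ together with the uniqueness of $G(-)$ is essentially the same standard argument as in that reference.
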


\proof see  \cite{pss1} \qed
\medskip
\begin{notation}\label{poly}
Let us define a set of polynomials $\mathcal{G}=\{g_{1},\ldots ,g_{7}\}$ by
\begin{itemize}
\item $g_{1}=x_{4}^4-x_{1}x_{2}x_{3}x_{5}^{2}$
\item $g_{2}= x_{3}x_{4}^{3}-x_{1}^{3}x_{2}x_{5}^{2}$
\item $g_{3}=x_{3}^{2}-x_{1}^{2}x_{4}$
\item $g_{4}=x_{2}x_{4}^{2}-x_{1}^{2}x_{3}x_{5}$
\item $g_{5}=x_{2}x_{3}x_{4}-x_{1}^{4}x_{5}$
\item $g_{6}=x_{2}^{3}-x_{1}^{3}x_{3}$
\item $g_{7}=x_{1}x_{4}^{2}-x_{2}^{2}x_{5}$
\end{itemize}
Let $a=10q+r$, $0\leq r\leq 9$ and $d\geq 1$ with $\gcd(a,d)=1$. If $a\geq 19$ and $(a,d)\notin\{(21,1),(21,2)\}$ , we define set of polynomials $\mathcal{H}_{r}$ for $0\leq r\leq 9$ by
\begin{enumerate}
\item  $\mathcal{H}_{0}=\{h_{01},h_{02}\}$, where
\begin{itemize}
\item $h_{01}=x_{1}^{5q+d}-x_{5}^{q}$
\item $h_{02}=x_{1}^{5q+d-1}x_{2}^{2}-x_{4}^{2}x_{5}^{q-1}$
\smallskip

\end{itemize}
\item $\mathcal{H}_{1}=\{h_{11},\ldots,h_{15}\} $, we have
\smallskip

\begin{itemize}
\item $h_{11}=x_{1}^{5q+d-13}x_{2}^{9}-x_{5}^{q+1} $,
\item $h_{12}= x_{1}^{5q+d-6}x_{2}^{5}-x_{4}x_{5}^{q}$,
\item $h_{13}=x_{1}^{5q+d-1}x_{2}^{2}-x_{3}x_{5}^{q}$,
\item $h_{14}=x_{1}^{5q+d+2}-x_{2}x_{5}^{q}$,
\item $h_{15}=x_{1}^{5q+d+1}x_{2}-x_{4}^{2}x_{5}^{q-1}$
\end{itemize}
\smallskip

\item $\mathcal{H}_{2}=\{h_{21},\ldots,h_{24}\} $, where
\smallskip

\begin{itemize}
\item $h_{21}=x_{1}^{5q+d-11}x_{2}^{8}-x_{5}^{q+1} $,
\item $h_{22}= x_{1}^{5q+d-4}x_{2}^{4}-x_{4}x_{5}^{q}$,
\item $h_{23}=x_{1}^{5q+d+1}x_{2}-x_{3}x_{5}^{q}$,
\item $h_{24}=x_{1}^{5q+d+3}-x_{4}^{2}x_{5}^{q-1}$,

\end{itemize}
\smallskip

\item $\mathcal{H}_{3}=\{h_{31},\ldots,h_{35}\} $, we have
\smallskip

\begin{itemize}
\item $h_{31}=x_{1}^{5q+d-9}x_{2}^{7}-x_{5}^{q+1} $,
\item $h_{32}= x_{1}^{5q+d-2}x_{2}^{3}-x_{4}x_{5}^{q}$,
\item $h_{33}=x_{1}^{5q+d+3}-x_{3}x_{5}^{q}$,
\item $h_{34}=x_{1}^{5q+d-3}x_{2}^{5}-x_{4}^{3}x_{5}^{q-1}$,
\item $h_{35}=x_{1}^{5q+d+2}x_{2}^{2}-x_{3}x_{4}^{2}x_{5}^{q-1}$
\end{itemize}
\smallskip

\item $\mathcal{H}_{4}=\{h_{41},\ldots,h_{45}\} $, we have
\smallskip

\begin{itemize}
\item $h_{41}=x_{1}^{5q+d-7}x_{2}^{6}-x_{5}^{q+1} $,
\item $h_{42}= x_{1}^{5q+d}x_{2}^{2}-x_{4}x_{5}^{q}$,
\item $h_{43}=x_{1}^{5q+d+5}-x_{2}x_{3}x_{5}^{q}$,
\item $h_{44}=x_{1}^{5q+d-1}x_{2}^{4}-x_{4}^{3}x_{5}^{q-1}$,
\item $h_{45}=x_{1}^{5q+d+4}x_{2}-x_{3}x_{4}^{2}x_{5}^{q-1}$
\end{itemize}
\smallskip

\item $\mathcal{H}_{5}=\{h_{51},\ldots,h_{54}\} $, we have
\smallskip

\begin{itemize}
\item $h_{51}=x_{1}^{5q+d-5}x_{2}^{5}-x_{5}^{q+1} $,
\item $h_{52}= x_{1}^{5q+d+2}x_{2}-x_{4}x_{5}^{q}$,
\item $h_{53}=x_{1}^{5q+d+1}x_{2}^{3}-x_{4}^{3}x_{5}^{q-1}$,
\item $h_{54}=x_{1}^{5q+d+6}-x_{3}x_{4}^{2}x_{5}^{q-1}$
\end{itemize}
\smallskip

\item $\mathcal{H}_{6}=\{h_{61},\ldots,h_{63}\} $, we have
\smallskip

\begin{itemize}
\item $h_{61}=x_{1}^{5q+d-3}x_{2}^{4}-x_{5}^{q+1} $,
\item $h_{62}= x_{1}^{5q+d+4}-x_{4}x_{5}^{q}$,
\item $h_{63}=x_{1}^{5q+d+3}x_{2}^{2}-x_{4}^{3}x_{5}^{q-1}$,

\end{itemize}
\smallskip

\item $\mathcal{H}_{7}=\{h_{71},\ldots,h_{75}\} $, we have
\smallskip

\begin{itemize}
\item $h_{71}=x_{1}^{5q+d-1}x_{2}^{3}-x_{5}^{q+1} $,
\item $h_{72}= x_{1}^{5q+d-2}x_{2}^{5}-x_{4}^{2}x_{5}^{q}$,
\item $h_{73}=x_{1}^{5q+d+3}x_{2}^{2}-x_{3}x_{4}x_{5}^{q}$,
\item $h_{74}=x_{1}^{5q+d+6}-x_{2}x_{4}x_{5}^{q}$,
\item $h_{75}=x_{1}^{5q+d+5}x_{2}-x_{4}^{2}x_{5}^{q-1}$
\end{itemize}
\smallskip

\item $\mathcal{H}_{8}=\{h_{81},\ldots,h_{84}\} $, we have
\smallskip

\begin{itemize}
\item $h_{81}=x_{1}^{5q+d+1}x_{2}^{2}-x_{5}^{q+1} $,
\item $h_{82}=x_{1}^{5q+d+5}x_{2}-x_{3}x_{4}x_{5}^{q}$,
\item $h_{83}=x_{1}^{5q+d}x_{2}^{4}-x_{4}^{2}x_{5}^{q}$,
\item $h_{84}=x_{1}^{5q+d+7}-x_{4}^{3}x_{5}^{q-1}$
\end{itemize}
\smallskip

\item $\mathcal{H}_{9}=\{h_{91},\ldots,h_{93}\} $, where
\smallskip

\begin{itemize}
\item $h_{91}=x_{1}^{5q+d+3}x_{2}-x_{5}^{q+1} $,
\item $h_{92}=x_{1}^{5q+d+7}-x_{3}x_{4}x_{5}^{q}$,
\item $h_{93}=x_{1}^{5q+d+2}x_{2}^{3}-x_{4}^{2}x_{5}^{q}$,

\end{itemize}
\smallskip
\end{enumerate}
\end{notation}
\begin{theorem}\label{gen}
Let $a=10q+r$, $0\leq r\leq 9$ and $d\geq 1$ with $\gcd(a,d)=1$. If $a\geq 19$ and $(a,d)\notin\{(21,1),(21,2)\}$ , then $\mathcal{G}\cup \mathcal{H}_{r}$ is the minimal generating set for the defining ideal $\mathfrak{p}_{5}$ of the semigroup $\Gamma_{5}$.
\end{theorem}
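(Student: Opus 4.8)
The plan is to apply Theorem \ref{gastinger} together with Lemma \ref{equal}, using the variable $x_{1}$ as the distinguished variable. Concretely, I would set $J = \langle \mathcal{G}\cup\mathcal{H}_{r}\rangle$ and first verify that $J\subseteq \mathfrak{p}_{5}$: each $g_{i}$ and each $h_{rj}$ is a binomial $x^{\alpha}-x^{\beta}$, so it suffices to check that the two monomials have equal weight under the grading $\deg x_{i}=s_{i}$, i.e. that $\sum\alpha_{i}s_{i}=\sum\beta_{i}s_{i}$. For the $g_{i}$ this is a finite check; for the $h_{rj}$ it is a check that depends on $q,d$ but is linear in those parameters, using $s_{1}=a=10q+r$, $s_{2}=2a+d$, $s_{3}=3a+3d$, $s_{4}=4a+6d$, $s_{5}=5a+10d$. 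Then, by Theorem \ref{gastinger} applied with $i=1$, to conclude $J=\mathfrak{p}_{5}$ it is enough to show $\dim_{k} A/\langle J+(x_{1})\rangle = a$.

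The next and main step is therefore to compute $A/\langle J+(x_{1})\rangle$. Setting $x_{1}=0$, the generators $\mathcal{G}\cup\mathcal{H}_{r}$ reduce to monomials (the first term of each binomial that does not involve $x_{1}$, together with the trivial ones): from $\mathcal{G}$ we get $x_{4}^{4},\ x_{3}x_{4}^{3},\ x_{3}^{2},\ x_{2}x_{4}^{2},\ x_{2}x_{3}x_{4},\ x_{2}^{3},\ x_{2}^{2}x_{5}$, and from $\mathcal{H}_{r}$ we get, for each $r$, the monomials $x_{5}^{q+1}$ (or $x_{5}^{q}$ when $r=0$) and the $x_{4},x_{5}$- or $x_{3},x_{4},x_{5}$-monomials appearing as the second terms of the $h_{rj}$. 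Let $\mathfrak{b}_{r}$ be the monomial ideal of $k[x_{2},x_{3},x_{4},x_{5}]$ generated by these monomials; then $A/\langle J+(x_{1})\rangle \cong k[x_{2},x_{3},x_{4},x_{5}]/\mathfrak{b}_{r}$, and the task is to count the standard monomials, i.e. the monomials in $x_{2},x_{3},x_{4},x_{5}$ not divisible by any generator of $\mathfrak{b}_{r}$. The relations $x_{3}^{2}\in\mathfrak{b}_{r}$, $x_{2}^{3}\in\mathfrak{b}_{r}$, $x_{2}^{2}x_{5}\in\mathfrak{b}_{r}$ and $x_{4}^{4}\in\mathfrak{b}_{r}$ already cut the candidate set down to a finite, explicitly enumerable list parametrized by the exponents $(b_{2},b_{3},b_{4},b_{5})$ with $b_{2}\le 2$, $b_{3}\le 1$, $b_{4}\le 3$, and $b_{5}$ bounded linearly in $q$; a direct bookkeeping then gives $\dim_{k}k[x_{2},x_{3},x_{4},x_{5}]/\mathfrak{b}_{r}=10q+r=a$, case by case in $r$. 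In fact a cleaner way to organize this count is to use the Ap\'{e}ry set description already proved: the images of the monomials $x_{2}^{r_{1n}}x_{3}^{q_{1n}}x_{4}^{q_{2n}}x_{5}^{q_{3n}}$, $n=0,1,\dots,a-1$, should be a $k$-basis of $k[x_{2},x_{3},x_{4},x_{5}]/\mathfrak{b}_{r}$, since by Lemma \ref{uniqueexp} these represent distinct Ap\'{e}ry elements, and one checks that every standard monomial of $\mathfrak{b}_{r}$ is of this form.

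The final step is to verify, via Lemma \ref{equal}, that no proper subset of $\mathcal{G}\cup\mathcal{H}_{r}$ generates $\mathfrak{p}_{5}$. Fix a monomial order (the degree reverse lexicographic order with $x_{1}>x_{2}>x_{3}>x_{4}>x_{5}$, under which the leading term of each binomial is the pure-power-in-$x_{1}$-free or $x_{1}$-free monomial as appropriate — for the $g_{i}$ the leading terms are $x_{4}^{4},x_{3}x_{4}^{3},x_{3}^{2},x_{2}x_{4}^{2},x_{2}x_{3}x_{4},x_{2}^{3},x_{2}^{2}x_{5}$, and for the $h_{rj}$ they are the monomials not involving $x_{1}$). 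I would check that $\mathcal{G}\cup\mathcal{H}_{r}$ is in fact a Gr\"{o}bner basis (all $S$-polynomials reduce to zero — here Buchberger's first criterion kills the pairs with coprime leading terms, and the remaining overlaps are few), so that $\mathrm{LT}(\mathfrak{p}_{5})$ is generated exactly by these leading monomials and, moreover, this generating set of $\mathrm{LT}(\mathfrak{p}_{5})$ is minimal (no listed leading monomial divides another). Then for each $i$, removing $f_{i}$ from the list and passing to leading terms gives a monomial ideal whose minimal generators are a subset of the remaining leading monomials, which is exactly the hypothesis of Lemma \ref{equal}; minimality follows. The main obstacle is the bookkeeping in the middle step: getting the monomial ideal $\mathfrak{b}_{r}$ exactly right for each residue $r$ (in particular tracking which $x_{5}$-power appears and handling the small cases and the excluded pairs $(21,1),(21,2)$), and then confirming the count is $a$; the Gr\"{o}bner-basis verification, while tedious, is mechanical once the leading terms are identified.
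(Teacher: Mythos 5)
Your plan is essentially the paper's own proof: reduce modulo $x_{1}$ so that $\mathcal{G}\cup\mathcal{H}_{r}$ becomes a monomial ideal, count the standard monomials to get $\dim_{k}A/\langle J+(x_{1})\rangle=a$ and conclude generation via Theorem \ref{gastinger}, then obtain minimality from Lemma \ref{equal} via leading terms; the paper carries out the count explicitly only for $r=1$ (the sets $\mathcal{B}_{1},\ldots,\mathcal{B}_{12}$) and treats the other residues analogously, just as you propose. Your additional remarks (the weight check that $J\subseteq\mathfrak{p}_{5}$ and the explicit Gr\"{o}bner-basis verification behind Lemma \ref{equal}) only make explicit steps the paper leaves implicit, with the minor caveat that in your Ap\'{e}ry-monomial reformulation the elements with $r_{1n}=2$ and $q_{3n}>0$ must be represented by the alternative expansion $x_{3}^{q_{1n}}x_{4}^{q_{2n}+2}x_{5}^{q_{3n}-1}$, since $x_{2}^{2}x_{5}$ lies in the monomial ideal.
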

\proof we proceed by the theorem \ref{gastinger}. Let $A=k[x_{1},\ldots,x_{5}]$ and  $J=\mathfrak{p}_{5}+\langle x_{1}\rangle $ and Mon$(A)$ denotes the set of all monomials of $A$. We need to show that $\mathrm{dim}_{k}B=a$, where $B=k[x_{1},\ldots,x_{5}]/J $. Suppose $a=10q+1$, We note that $J$ is a monomial ideal and  
$$G(J)=\{x_{1}, x_{4}^{4},x_{3}x_{4}^{3},x_{3}^{2},x_{2}x_{4}^{2},x_{2}x_{3}x_{4},x_{2}^{3},x_{2}^{2}x_{5},x_{5}^{q+1},x_{4}x_{5}^{q},x_{3}x_{5}^{q},x_{2}x_{5}^{q},x_{4}^{2}x_{5}^{q-1} \}$$  where $G(J)$ is the unique minimal generating set of $J$. We consider the canonical mapping $\pi:A\rightarrow B$ and the set $S=\{m\in \mathrm{Mon}(A) \mid p\nmid m, p\in G(J)\}$. Then $\pi(S)$ forms a basis set of $B$ over $k$. Let us list those monomials in disjoint sets,
\begin{itemize}
\item $\mathcal{B}_1= \{1\}$
\item $\mathcal{B}_2= \{x_{2},x_{2}^{2},x_{3},x_{4},x_{4}^{2},x_{4}^{3},x_{5},x_{5}^{2},\ldots,x_{5}^{q}\}$
\item $\mathcal{B}_3= \{x_{2}x_{3},x_{2}^{2}x_{3},x_{2}x_{4},x_{2}^{2}x_{4},x_{2}x_{5},x_{2}x_{5}^{2},\ldots,x_{2}x_{5}^{q-1}\}$
\item $\mathcal{B}_4= \{x_{3}x_{4},x_{3}x_{4}^{2}\}$
\item $\mathcal{B}_5= \{x_{3}x_{5},x_{3}x_{5}^{2},\ldots,x_{3}x_{5}^{q-1}\}$
\item $\mathcal{B}_6= \{x_{4}x_{5},\ldots,x_{4}x_{5}^{q-1}\}$
\item $\mathcal{B}_7= \{x_{4}^{2}x_{5},\ldots,x_{4}^{2}x_{5}^{q-2}\}$
\item $\mathcal{B}_8= \{x_{4}^{3}x_{5},x_{4}^{3}x_{5}^{2},\ldots,x_{4}^{3}x_{5}^{q-2}\}$
\item $\mathcal{B}_9= \{x_{2}x_{3}x_{5},\ldots,x_{2}x_{3}x_{5}^{q-1}\}$
\item $\mathcal{B}_{10}= \{x_{3}x_{4}x_{5},\ldots,x_{3}x_{4}x_{5}^{q-1}\}$
\item $\mathcal{B}_{11}= \{x_{2}x_{4}x_{5},\ldots,x_{2}x_{4}x_{5}^{q-1}\}$
\item $\mathcal{B}_{12}= \{x_{3}x_{4}^{2}x_{5},\ldots,x_{3}x_{4}^{2}x_{5}^{q-2}\}$

\end{itemize}
We have $S=\displaystyle\sum_{i=1}^{12}\mathcal{B}_{i}$, therefore  
\begin{align*}
\mid S\mid =\displaystyle\sum_{i=1}^{12}\mid B_{i}\mid &= 1+(6+q)+(4+q-1)+(2+q-1)+(q-1)+(q-2)+(q-1)\\
&+(q-1)+(q-1)+(q-2)+(q-2)+(q-1)+(q-2)\\
&=10q+1 =a.
\end{align*}
Hence by \ref{gastinger} the set $\mathcal{G}\cup \mathcal{H}_{1}$ is the generating set for the defining ideal $\mathfrak{p}_{5}$ of the semigroup $\Gamma_{5}$. Minimality of the generating set follows from the lemma \ref{equal}. The Other cases can be proved by similar argument.\qed
\bigskip

\begin{notation}
 Let us define some set of polynomials $\mathcal{T}_{(a,d)} $ in $k[x_{1},\ldots,x_{5}]$ for particular values of $a$ and $d$,
\begin{enumerate}
\item $\mathcal{T}_{(11,d)}=\{p_{11},p_{12},p_{13},p_{14}\} $ where,
\begin{itemize}
\item $p_{11}=x_{1}^{d-1}x_{2}^{5}-x_{4}x_{5}$
\item $p_{12}=x_{1}^{d+4}x_{2}^{2}-x_{3}x_{5}$
\item $p_{13}=x_{1}^{d+7}-x_{2}x_{5}$
\item $p_{14}=x_{1}^{d+6}x_{2}-x_{4}^{2}$

\end{itemize}
\item  $\mathcal{T}_{(12,d)}=\{p_{21},p_{22},p_{23}\} $ where,
\begin{itemize}
\item $p_{21}=x_{1}^{d+1}x_{2}^{4}-x_{4}x_{5}$
\item $p_{22}=x_{1}^{d+6}x_{2}-x_{3}x_{5}$
\item $p_{23}=x_{1}^{d+8}-x_{4}^{2}$
\end{itemize}
\item  $\mathcal{T}_{(13,d)}=\{p_{31},p_{32},p_{33},p_{34}\} $ and
\begin{itemize}
\item $p_{31}=x_{1}^{d+3}x_{2}^{3}-x_{4}x_{5}$
\item $p_{32}=x_{1}^{d+8}-x_{3}x_{5}$
\item $p_{33}=x_{1}^{d+2}x_{2}^{5}-x_{4}^{3}$
\item $p_{34}=x_{1}^{d+7}x_{2}^{2}-x_{3}x_{4}^{2}$
\end{itemize}
\item  $\mathcal{T}_{(14,d)}=\{p_{41},p_{42},p_{43},p_{44}\} $ and
\begin{itemize}
\item $p_{41}=x_{1}^{d+5}x_{2}^{2}-x_{4}x_{5}$
\item $p_{42}=x_{1}^{d+10}-x_{2}x_{3}x_{5}$
\item $p_{43}=x_{1}^{d+4}x_{2}^{4}-x_{4}^{3}$
\item $p_{44}=x_{1}^{d+9}x_{2}-x_{3}x_{4}^{2}$
\end{itemize}
\item  $\mathcal{T}_{(15,d)}=\{p_{51},p_{52},p_{53},p_{54}\} $ where,
\begin{itemize}
\item $p_{51}=x_{1}^{d}x_{2}^{5}-x_{5}^{2}$
\item $p_{52}=x_{1}^{d+7}x_{2}-x_{4}x_{5}$
\item $p_{53}=x_{1}^{d+6}x_{2}^{3}-x_{4}^{3}$
\item $p_{54}=x_{1}^{d+11}-x_{3}x_{4}^{2}$
\end{itemize}
\item  $\mathcal{T}_{(16,d)}=\{p_{61},p_{62},p_{63}\} $ where,
\begin{itemize}
\item $p_{61}=x_{1}^{d+2}x_{2}^{4}-x_{5}^{2}$
\item $p_{62}=x_{1}^{d+9}-x_{4}x_{5}$
\item $p_{63}=x_{1}^{d+8}x_{2}^{2}-x_{4}^{3}$
\end{itemize}
\item  $\mathcal{T}_{(17,d)}=\{p_{71},p_{72},p_{73},p_{74},p_{75}\} $ where,
\begin{itemize}
\item $p_{71}=x_{1}^{d+4}x_{2}^{3}-x_{5}^{2}$
\item $p_{72}=x_{1}^{d+3}x_{2}^{5}-x_{4}^{2}x_{5}$
\item $p_{73}=x_{1}^{d+8}x_{2}^{2}-x_{3}x_{4}x_{5}$
\item $p_{74}=x_{1}^{d+11}-x_{2}x_{4}x_{5}$
\item $p_{75}=x_{1}^{d+10}x_{2}-x_{4}^{3}$
\end{itemize}
\item  $\mathcal{T}_{(18,d)}=\{p_{81},p_{82},p_{83},p_{84}\} $ where,
\begin{itemize}
\item $p_{81}=x_{1}^{d+6}x_{2}^{2}-x_{5}^{2}$
\item $p_{82}=x_{1}^{d+10}x_{2}-x_{3}x_{4}x_{5}$
\item $p_{83}=x_{1}^{d+5}x_{2}^{4}-x_{4}^{2}x_{5}$
\item $p_{84}=x_{1}^{d+12}-x_{4}^{3}$
\end{itemize}
\medskip

\item Let us consider the following sets.
\begin{itemize}
\item $\mathfrak{T}_{(11,1)}=\mathcal{T}_{(11,1)}\cup\{g_{3},g_{5},g_{6}, x_{2}^{5}-x_{4}x_{5}\}$
\item $\mathfrak{T}_{(11,2)}=\mathcal{T}_{(11,2)}\cup\{g_{3},g_{5},g_{6}, x_{2}^{3}x_{3}^{2}-x_{5}^{2}\}$
\item $\mathfrak{T}_{(11,3)}=\mathcal{T}_{(11,3)}\cup\{g_{3},g_{5},g_{6}, x_{1}x_{2}^{3}x_{3}^{2}-x_{5}^{2}\}$
\item $\mathfrak{T}_{(11,4)}=\mathcal{T}_{(11,4)}\cup\{g_{3},g_{5},g_{6}, x_{1}^{2}x_{2}^{3}x_{3}^{2}-x_{5}^{2}\}$
\item $\mathfrak{T}_{(11,5)}=\mathcal{T}_{(11,5)}\cup\{g_{3},g_{5},g_{6}, x_{2}^{6}x_{3}-x_{5}^{2}\}$
\item $\mathfrak{T}_{(11,d)}=\mathcal{T}_{(11,d)}\cup\{g_{3},g_{5},g_{6}, x_{1}x_{2}^{6}x_{3}-x_{5}^{2}\}$, where $d=6,7$
\item $\mathfrak{T}_{(11,d)}=\mathcal{T}_{(11,d)}\cup\{g_{3},g_{5},g_{6}, x_{1}^{d-8}x_{2}^{9}-x_{5}^{2}\}$, where $d\geq 8$.
\end{itemize}
\item Let 
\begin{itemize}
\item $\mathfrak{T}_{(12,1)}=\mathcal{T}_{(12,1)}\cup\{g_{3},g_{5},g_{6},g_{7}, x_{1}x_{2}^{2}x_{3}^{2}-x_{5}^{2}\}$
\item $\mathfrak{T}_{(12,5)}=\mathcal{T}_{(12,5)}\cup\{g_{3},g_{5},g_{6},g_{7}, x_{1}^{2}x_{2}^{5}x_{3}-x_{5}^{2}\}$
\item $\mathfrak{T}_{(12,d)}=\mathcal{T}_{(12,d)}\cup\{g_{3},g_{5},g_{6},g_{7}, x_{1}^{d-6}x_{2}^{8}-x_{5}^{2}\}$, for $d\geq 6$.

\end{itemize}
\medskip

\item Suppose
\begin{itemize}
\item $\mathfrak{T}_{(13,1)}=\mathcal{T}_{(13,1)}\cup\{g_{3},g_{4},g_{5},g_{6},g_{7}, x_{2}^{2}x_{3}-x_{5}^{2}\}$
\item $\mathfrak{T}_{(13,2)}=\mathcal{T}_{(13,2)}\cup\{g_{3},g_{4},g_{5},g_{6},g_{7}, x_{1}x_{2}^{4}x_{3}-x_{5}^{2}\}$
\item $\mathfrak{T}_{(13,3)}=\mathcal{T}_{(13,3)}\cup\{g_{3},g_{4},g_{5},g_{6},g_{7}, x_{1}^{2}x_{2}^{4}x_{3}-x_{5}^{2}\}$
\item $\mathfrak{T}_{(13,d)}=\mathcal{T}_{(13,d)}\cup\{g_{3},g_{4},g_{5},g_{6},g_{7}, x_{1}^{d-4}x_{2}^{7}-x_{5}^{2}\}$, for $d\geq 4$.
\end{itemize}
\medskip

\item Suppose
\begin{itemize}
\item $\mathfrak{T}_{(14,1)}=\mathcal{T}_{(14,1)}\cup\{g_{3},g_{4},g_{5},g_{6},g_{7}, x_{2}^{4}x_{3}-x_{5}^{2}\}$
\item $\mathfrak{T}_{(14,d)}=\mathcal{T}_{(14,d)}\cup\{g_{3},g_{4},g_{5},g_{6},g_{7}, x_{1}^{d-2}x_{2}^{6}-x_{5}^{2}\}$, for $d\geq 2$.
\end{itemize}
\item Let 
\begin{itemize}
\item $\mathfrak{T}_{(a,d)}=\mathcal{T}_{(a,d)}\cup\{g_{3},g_{4},g_{5},g_{6},g_{7}\}$, for $15\leq a\leq 18$ and $d\geq 1$
\end{itemize}
\item Let,
\begin{itemize}
\item $\mathfrak{T}_{(21,d)}=(\mathcal{H}_{1}\setminus \{h_{11}\})\cup \{x_{1}x_{2}^{6}x_{3}-x_{5}^{3}\}$, for $d=1,2$
\end{itemize}
\end{enumerate}
\end{notation}
\begin{theorem}
Let $11\leq a\leq 18$ and $d\geq 1$ with $\gcd(a,d)=1$ or  $(a,d)\in\{(21,1),(21,2)\}$ , then $\mathfrak{T}_{(a,d)}$ is the minimal generating set for the defining ideal $\mathfrak{p}_{5}$ of the semigroup $\Gamma_{5}$.
\end{theorem}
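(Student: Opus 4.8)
The plan is to argue exactly as in the proof of Theorem \ref{gen}, using Gastinger's criterion (Theorem \ref{gastinger}) together with Lemma \ref{equal}, but now carried out separately for each of the finitely many small values of $a$ in the range $11\le a\le 18$ (and the two sporadic pairs $(21,1),(21,2)$). For a fixed $a$ one first checks that $\mathfrak{T}_{(a,d)}\subseteq\mathfrak{p}_5$, which is a direct verification that each listed binomial is a genuine relation among $t^{s_1},\dots,t^{s_5}$; this amounts to matching exponents using $s_1=a$, $s_2=2a+d$, $s_3=3a+3d$, $s_4=4a+6d$, $s_5=5a+10d$, and is routine. The substantive content is the dimension count.

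Next I would set $A=k[x_1,\dots,x_5]$, $J=\mathfrak{T}_{(a,d)}+\langle x_1\rangle$, and $B=A/J$. The key observation, exactly as in Theorem \ref{gen}, is that modulo $x_1$ every generator of $\mathfrak{T}_{(a,d)}$ becomes a monomial (each binomial $x_1^{e}m-m'$ with $e\ge 1$ collapses to $-m'$, and the generators among $g_3,g_4,g_5,g_6,g_7$ that survive contribute their pure-$x_2,x_3,x_4$ monomials), so $J$ is a monomial ideal; one then writes down its unique minimal monomial generating set $G(J)$ and the finite $k$-basis $S=\{m\in\mathrm{Mon}(A)\mid p\nmid m\ \text{for all}\ p\in G(J)\}$ of $B$. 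One organizes $S$ into disjoint blocks $\mathcal B_1,\dots,\mathcal B_N$ indexed by the "tail" monomial in $x_2,x_3,x_4$ (e.g. $1$, $x_2$, $x_2^2$, $x_3$, $x_4$, $x_4^2$, $x_4^3$, $x_2x_3$, $x_3x_4$, etc.), within each block letting the power of $x_5$ range over the allowed interval determined by the relations $h_{ri}$ (resp.\ $p_{ij}$). Summing the block sizes should telescope to exactly $a$. Since $d$ enters only through the $x_1$-exponents of the generators, which are killed in $B$, the count is in fact independent of $d$ for each residue class, so one genuinely only has eight (plus two) separate computations. Having obtained $\dim_k B=a$, Theorem \ref{gastinger} gives $\langle\mathfrak{T}_{(a,d)}\rangle=\mathfrak{p}_5$.

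Finally, minimality: apply Lemma \ref{equal} with the same monomial order used in Theorem \ref{gen}. One must check that the leading terms of the generators in $\mathfrak{T}_{(a,d)}$ are pairwise distinct and all lie in $G(\mathrm{LT}(\mathfrak{p}_5))$, and that deleting any single generator $f_i$ strictly shrinks the leading-term ideal. Because the chosen generators are binomials whose two monomials involve essentially disjoint variable supports (one side a power of $x_1$ times a power of $x_2$, the other side a monomial in $x_3,x_4,x_5$), the leading terms are forced and the argument is the same bookkeeping as in \cite{pss1}.

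\textbf{Main obstacle.} The only real work is the case-by-case dimension count: for each of the ten parameter situations one has to correctly identify $G(J)$ after reduction mod $x_1$ — in particular to notice which of $g_3,\dots,g_7$ do \emph{not} get absorbed into the ideal generated by the $x_5$-divisible monomials — and then to choose the block decomposition of $S$ so that the sum visibly equals $a$. The small values of $a$ make the $x_5$-ranges short (often empty), so the pattern is less uniform than in Theorem \ref{gen} and each case must be inspected individually; the sporadic pairs $(21,1),(21,2)$ need the modified generator $x_1x_2^6x_3-x_5^3$ precisely because the generic $h_{11}=x_1^{5q+d-13}x_2^9-x_5^{q+1}$ would have a non-positive $x_1$-exponent there, and one checks that this replacement still yields a monomial ideal mod $x_1$ with the correct colength. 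I would present one representative case in full (say $a=15$, where $\mathfrak{T}_{(a,d)}=\mathcal T_{(a,d)}\cup\{g_3,g_4,g_5,g_6,g_7\}$ is cleanest) and indicate that the remaining cases follow by the identical method.
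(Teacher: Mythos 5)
Your strategy is the one the paper itself uses --- its proof of this theorem is literally ``similar proof as Theorem~\ref{gen}'', i.e.\ Gastinger's criterion (Theorem~\ref{gastinger}) applied through a colength count of the candidate ideal plus $\langle x_1\rangle$, with minimality from Lemma~\ref{equal} --- so your outline is the intended one. However, one step of your recipe fails as stated for several of the pairs this theorem actually covers. You assert that modulo $x_1$ every generator of $\mathfrak{T}_{(a,d)}$ becomes a monomial, hence $J$ is a monomial ideal whose staircase can be counted, and that consequently the count is independent of $d$ within each residue class. For the small values of $a$ the appended generator can have $x_1$-exponent zero on both terms: for instance $x_2^5-x_4x_5$ in $\mathfrak{T}_{(11,1)}$, $x_2^3x_3^2-x_5^2$ in $\mathfrak{T}_{(11,2)}$, $x_2^9-x_5^2$ in $\mathfrak{T}_{(11,8)}$, $x_2^2x_3-x_5^2$ in $\mathfrak{T}_{(13,1)}$, $x_2^7-x_5^2$ in $\mathfrak{T}_{(13,4)}$, and $x_2^4x_3-x_5^2$ in $\mathfrak{T}_{(14,1)}$. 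For such pairs the image of the candidate ideal in $k[x_2,\ldots,x_5]$ is \emph{not} a monomial ideal, so ``monomials divisible by no element of $G(J)$'' is not automatically a $k$-basis of $B$.

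Concretely, for $(a,d)=(13,1)$ the surviving binomial $x_2^2x_3-x_5^2$ produces, via $S$-polynomials with $x_2^3$, $x_3^2$ and $x_2^2x_5$, the new leading terms $x_2x_5^2$ and $x_5^3$, and only after this Gr\"obner (or normal-form) step does the count come out to $13$; moreover the resulting staircases genuinely change with $d$ --- which is exactly why the paper lists $\mathfrak{T}_{(11,d)}$ separately for $d=1,\ldots,7$ and $d\geq 8$ --- so the claimed $d$-independence is not available, and the exceptional pairs $(21,1),(21,2)$ are excluded from Theorem~\ref{gen} for precisely this reason. Your proposed representative case $a=15$, where every generator carries a positive power of $x_1$, is the one case in which this difficulty is invisible, so presenting it ``in full'' would not certify the remaining cases. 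To complete the proof you must carry out the Gr\"obner/normal-form computation (or an equivalent direct count) in each $x_1$-free subcase before invoking Theorem~\ref{gastinger}; the containment $\mathfrak{T}_{(a,d)}\subseteq\mathfrak{p}_5$ and the minimality argument via Lemma~\ref{equal} are fine and match the paper.
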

\proof Similar proof as theorem \ref{gen}.\qed
\medskip

\begin{remark}
 Minimal generating set of the defining ideal $\mathfrak{p}_{5}$ can be found abstractly in \cite{rs1}, when elements of  Ap\'{e}ry set has unique representation. But here we have written explicitly.
\end{remark}
\medskip
\section{Ap\'{e}ry table and Tangent Cone of $k[[\Gamma_{5}]]$}
Throughout this section we assume that the field $k$ is infinite.
\medskip

\begin{definition} Let $(R,\mathfrak{m})$ be a Noetherian local ring and $I\subset R$ 
be an ideal of $R$. The fibre cone of $I$ is the ring 
$$F(I)=\displaystyle\bigoplus_{n\geq 0}\dfrac{I^{n}}{\mathfrak{m}I^{n}}\cong R[It]\otimes R/\mathfrak{m}.$$ 
Krull dimension of the ring $F(I)$ is called the \textit{analytic spread} of 
$I$, denoted by $\ell(I)$.
\end{definition}
\medskip

An ideal $J\subset I$ is called a \textit{reduction} of $I$ if there exists an 
integer $n>0$ such that $I^{n+1}=JI^{n}$. A reduction $J$ of $I$ is a \textit{minimal reduction} 
if $J$ is minimal with respect to inclusion among reductions of $I$. 
A minimal reduction always exists by \cite{nr}. It is well known that $J$ is a 
minimal reduction of $I$ if and only if $J$ is minimally generated by $\ell(I)$ 
number of elements, i.e, $\mu(J)=\ell(I)$. If  $J$ is a minimal reduction of $I$, 
then the least integer $r$ such that $I^{r+1}=JI^{r}$, is the reduction number 
of $I$ with respect to $J$, denoted by $r_{J}(I)$.
\medskip

We are interested in the semigroup ring $k[[\Gamma_{5}]]$, 
which is the coordinate ring of the algebroid monomial curve defined by 
the numerical semigroup $\Gamma_{5}$. Let  $a\geq 11$ and $d > 0$ be two integers, 
such that $\gcd(a,d)=1$. Let $R=k[[t^{a},t^{2a+d},t^{3a+3d},t^{4a+6d}, t^{5a+10d}]]$ and 
$\mathfrak{m}$ is the maximal ideal $\langle t^{a},t^{2a+d},t^{3a+3d},t^{4a+6d}, t^{5a+10d}\rangle$. 
Consider the principal ideal $I=\langle t^{a}\rangle \subset R$. The fibre cone of 
$I$ is the ring 
$$F(I)=\displaystyle\bigoplus_{n\geq 0}\dfrac{I^{n}}{\mathfrak{m}I^{n}}\cong R 
[It]\otimes R/\mathfrak{m}.$$ 
We note that here $\ell(I)=1$ and the tangent cone $G_{\mathfrak{m}}=\displaystyle\bigoplus_{n\geq 0}\dfrac{\mathfrak{m}^{n}}{\mathfrak{m}^{n+1}}$ is an $F(I)$-algebra. Moreover $F(I)\hookrightarrow G_{\mathfrak{m}} $ is a Noether normalisation (see \cite{cz1} and \cite{cz2}).
\medskip

Suppose $\Gamma$ be a numerical semigroup minimally generated by $a_{1}<\cdots 
<a_{e}$. Let $M=\Gamma\setminus\{0\}$ and for a positive integer $n$, we write 
$nM:=M+\cdots+M$ ($n$-copies). Let $\mathfrak{m}$ be the maximal ideal of the 
ring $k[[t^{a_{1}},\ldots t^{a_{e}}]]$. Then $(n+1)M=a+nM$ for all $n\geq r$ 
if and only if $r=r_{(t^{a_{1}})}(\mathfrak{m})$.
\medskip

Let $\mathrm{Ap}(\Gamma,a_{1})=\{0,\omega_{1},\ldots,\omega_{a_{1}-1}\}$. Now for 
each $n\geq 1$, let us define $\mathrm{Ap}(nM)=\{\omega_{n,0},\ldots\omega_{n,a_{1}-1}\}$ 
inductively. We define $\omega_{1,0}=a_{1}$ and $\omega_{1,i}=\omega_{i}$, for $1\leq i\leq a_{1}-1$. 
Then $\mathrm{Ap}(M)=\{a_{1},\omega_{1},\ldots,\omega_{a_{1}-1}\}$. Now we define 
$\omega_{n+1,i}=\omega_{n,i}$, if $\omega_{n,i}\in (n+1)M $, and $\omega_{n+1,i}=\omega_{n,i}+a_{1}$, 
otherwise. We note that $\omega_{n+1,i}=\omega_{n,i}+a_{1} $ for all $0\leq i\leq a_{1}-1$ and $n\leq r_{(t^{a_{1}})}(\mathfrak{m})$. Then, the Ap\'{e}ry table $\mathrm{AT}(\Gamma,a_{1})$ of $\Gamma$ is a table 
of size $(r_{(t^{a_{1}})}(\mathfrak{m})+1)\times a_{1}$, whose $(0,t)$ entry is $\omega_{t}$, 
for $0\leq t\leq {a_{1}-1}$ (we take $\omega_{0}=0$), and the $(s,t)$ entry is $\omega_{st}$, 
for $1\leq s\leq r_{(t^{a_{1}})}(\mathfrak{m})$ and $ 0\leq t\leq {a_{1}-1}$.
\medskip

\medskip
  
\begin{notation} We consider the following notations.
\begin{itemize}
\item Let $t_k$ denotes the number of elements in the Ap\'{e}ry set  $\mathrm{Ap}(\Gamma_{5},a)$ whose order is  $k$. We note that $t_{1}=4$ as $\Gamma_{5}$ is minimally generated by $5$ elements.
\item  Let $\psi:\mathbb{N}\rightarrow \mathbb{N}$ be a function defined as $\psi(2)=-1, \psi(3)=2,\psi(i)=0$ for $i\neq 2,3. $
\end{itemize}

\end{notation}
\begin{lemma}\label{number}
Let $a=10q+r$, $0\leq r\leq 9$. For $k\geq 2$ we have
\smallskip

\begin{center}
\begin{tabular}{ |c|c|c| } 
 \hline
 $k$ & $r$ & $t_k $\\ 
\hline
$<q$ &$ \geq 0$ & $10+\psi(k)$ \\ 
\hline
 $q$ & $>0$ & $10+\psi(k)$ \\ 
 $q$ & $ 0$ & $9+\psi(k)$ \\ 
\hline
 $q+1$ & $0$ & $5+\psi(k)$ \\ 
  & $1$ & $5+\psi(k)$ \\ 
  & $2$ & $6+\psi(k)$ \\ 
   & $3$ & $7+\psi(k)$ \\ 
  & $4,5,6$ & $8+\psi(k)$ \\ 
  & $7,8,9$ & $9+\psi(k)$ \\ 
 \hline
 $q+2$ & $0,1,2,3,4$ & $\psi(k)$ \\ 
 & $5$ & $1+\psi(k)$ \\ 
 & $6$ & $2+\psi(k)$ \\ 
 & $7$ & $2+\psi(k)$ \\ 
 & $8$ & $3+\psi(k)$ \\ 
 & $9$ & $4+\psi(k)$ \\ 
\hline
\end{tabular}
\end{center}

\end{lemma}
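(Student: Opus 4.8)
\textbf{Proof proposal for Lemma \ref{number}.}

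The plan is to count, for each fixed $k\geq 2$, the number of $n\in\{1,\dots,a-1\}$ for which the element $\phi(n)\in\mathrm{Ap}(\Gamma_{5},a)$ has order exactly $k$; here the order of $\phi(n)$ means the maximal $\ell$ with $\phi(n)\in \ell M$, which by the uniqueness of Ap\'{e}ry expansions (Lemma \ref{uniqueexp}) equals the total exponent $r_{1n}+q_{1n}+q_{2n}+q_{3n}$ in the unique expression $\phi(n)=r_{1n}(2a+d)+q_{1n}(3a+3d)+q_{2n}(4a+6d)+q_{3n}(5a+10d)$, except in the correction case $r_{1n}=2,\ q_{3n}>0$, where the rewriting $\mu(n)a+nd=q_{1n}(3a+3d)+(q_{2n}+2)(4a+6d)+(q_{3n}-1)(5a+10d)$ shows the order is $q_{1n}+q_{2n}+q_{3n}+1$. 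So first I would fix notation: writing $o(n)$ for the order, we have $o(n)=r_{1n}+q_{1n}+q_{2n}+q_{3n}$ when $r_{1n}\neq 2$ or $q_{3n}=0$, and $o(n)=q_{1n}+q_{2n}+q_{3n}+1$ when $r_{1n}=2$ and $q_{3n}>0$ (one checks this last value is indeed maximal, again via Lemma \ref{equals}, since any competing expression would produce a nontrivial solution of $a_1+3a_2+6a_3=10a_4$).

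Next I would recall from the Notations that $n=10q_{3n}+6q_{2n}+3q_{1n}+r_{1n}$ with $q_{2n},q_{1n}\in\{0,1\}$, $r_{1n}\in\{0,1,2\}$, $q_{3n}\geq 0$, and $(q_{2n},q_{1n},r_{1n})\neq$ the combination giving remainder $10$ (Remark \ref{bound}). Thus each $n$ with $1\leq n\leq a-1$ corresponds bijectively to a tuple $(q_{3n},q_{2n},q_{1n},r_{1n})$ subject to $0<n<a$. For a fixed target order $k$, I would enumerate the finitely many tuples $(q_{2},q_{1},r_{1})\in\{0,1\}\times\{0,1\}\times\{0,1,2\}$ (there are $12$, listed with the residue $r_{2}=6q_{2}+3q_{1}+r_{1}\in\{0,\dots,9\}$ each one produces), and for each determine the value(s) of $q_{3}$ that make $o(n)=k$: when $r_{1}\neq 2$ this forces $q_{3}=k-(r_{1}+q_{1}+q_{2})$, a single value; when $r_{1}=2$ it forces either $q_{3}=0$ and $2+q_{1}+q_{2}=k$, or $q_{3}=k-1-(q_{1}+q_{2})>0$. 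Then $n=10q_{3}+r_{2}$ is determined, and I just need to count how many of these fall in the range $1\leq n\leq a-1=10q+r-1$. This is where the $\psi$ correction and the dependence on $r$ enter: for $k$ well below $q$, all $10$ residue classes $r_{2}$ contribute exactly one admissible $n$ except that the $r_{1}=2$ case is shifted by one rung (it contributes the order-$(k)$ element via $q_{3}=k-1-q_{1}-q_{2}$ rather than $k-r_{1}-q_{1}-q_{2}$), giving the net count $10+\psi(k)$ after tracking that $\psi(2)=-1$, $\psi(3)=2$, $\psi(i)=0$ otherwise --- these exceptional values come precisely from the two boundary phenomena: $k=2$ loses the $(r_1,q_1,q_2)=(0,1,1)$ tuple because $r_2=9$ would need $q_3=-1$... wait, more accurately they record how the $r_1=2$ rewriting overlaps with or misses the naive count at the small orders $k=2,3$. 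For $k=q$, $k=q+1$, $k=q+2$ the upper cutoff $n\leq 10q+r-1$ starts excluding the larger $n=10q_3+r_2$ with $r_2\geq r$, which is exactly how the table's dependence on $r$ is produced: for $k=q+1$ only residues $r_2<r$ (roughly) survive, giving counts that grow with $r$ as tabulated, and for $k=q+2$ one needs $q_3=q$ hence $r_2$ small, producing the final block.

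The computation is thus a finite bookkeeping: for each of the four regimes ($k<q$; $k=q$; $k=q+1$; $k=q+2$; and implicitly $k\geq q+3$ giving $0$) and each residue $r$, list the admissible $(q_{3},q_{2},q_{1},r_{1})$, compute $n$, test $1\leq n\leq a-1$, and sum. I would organize this as a table mirroring the statement, treating the $r_{1}=2,q_{3}>0$ case separately throughout since it is the source of both the $\psi$ shift and most off-by-one subtleties. \textbf{The main obstacle} I anticipate is not conceptual but combinatorial care: correctly handling the overlap between the ordinary expansions and the rewritten ones in the band $k\in\{q,q+1,q+2\}$, where the cutoff $n<a$ interacts with the $+10q_3$ spacing --- in particular making sure no $n$ is counted twice (it cannot be, by uniqueness) and none is missed, and that the boundary residue $r_2=r$ versus $r_2=r-1$ is assigned to the correct side. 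Verifying the arithmetic for the $k=q+1$ row, where the answer genuinely depends on $r$ through six different cases, will require the most attention; the rest follows the same template. I would also double-check the small cases $a=20,\dots$ (so $q=2$) by hand against the formulas, since for small $q$ the regimes $k<q$, $k=q$, $k=q+1$ nearly collide.
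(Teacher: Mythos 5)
Your plan is correct and follows essentially the same route as the paper: use the uniqueness of Ap\'{e}ry expansions (with the $r_{1n}=2$, $q_{3n}>0$ rewriting) to identify the order of $\phi(n)$ with the total degree of its unique expression, enumerate the finitely many tuples $(q_{3n},q_{2n},q_{1n},r_{1n})$ giving order $k$, and count those with $1\leq n\leq a-1$, with the corrections $\psi(2)=-1$, $\psi(3)=2$ arising exactly as you describe from the $q_{3n}=0$ tuples with $r_{1n}=2$. The only cosmetic difference is that you index the enumeration by the twelve triples $(q_{2},q_{1},r_{1})$ and solve for $q_{3}$, whereas the paper fixes $q_{3n}\in\{k,k-1,k-2\}$ and lists ten tuples; both reduce the table to the same finite bookkeeping at the cutoffs $k=q,q+1,q+2$.
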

\begin{proof}
Let for some $1\leq n\leq a-1$, $o(\mu(n)a+nd)=k.$  We refer the generators of $\Gamma_5$ as $s_{1}<s_{2}<s_{3}<s_{4}<s_{5}$ as in section \ref{cow}. We note that, 
\begin{align*}
\mu(n)a+nd &=q_{1n}s_{3}+(q_{2n}+2)s_{4}+(q_{3n}-1)s_{5}&\mathrm{if}\quad r_{1n}= 2 \,\,\mathrm{and}\,\, q_{3}>0\\
&=r_{1n}s_{2}+q_{1n}s_{3}+q_{2n}s_{4}+q_{3n}s_{5}& \mathrm{otherwise}
\end{align*}
Since by the lemma \ref{uniqueexp} elements of Ap\'{e}ry set  $\mathrm{Ap}(\Gamma_{5},a)$ have unique expression, we have 
\begin{align*}
o(\mu(n)a+nd)=k &=r_{1n}+q_{1n}+q_{2n}+q_{3n}-1, \quad \mathrm{if}\,\, r_{1n}=2 \,\,\mathrm{and}\,\, q_{3n}>0\\
&=r_{1n}+q_{1n}+q_{2n}+q_{3n}, \quad \mathrm{otherwise}
\end{align*}
Suppose $k\geq 4.$ We claim $q_{3n}=0.$ If not then say  $q_{3n}=0.$ By remark \ref{bound} $k=r_{1n}+q_{1n}+q_{2n}\leq 2$, a contradiction. Therefore we have $q_{3n}\geq 1$. Again by remark \ref{bound}, we get  $q_{3n}\leq k\leq r_{1n}+q_{1n}+q_{2n}+q_{3n}\leq q_{3n}+2 $. So for a given $k,$  $q_{3n}$ have three possibilities viz. $k,$ $k-1$ or $k-2.$   We list down all the elements of order $k.$ \\ \vspace{1cm}
\begin{center}
\begin{tabular}{ |c|c| } 
 \hline
 $q_{3n}$&$ (q_{3n},q_{2n},q_{1n},r_{1n})$\\
\hline
$k$&$(k,0,0,0)$\\
\hline
&$(k-1,1,0,0)$\\
&$(k-1,0,1,0)$\\
$k-1$&$(k-1,0,0,1)$\\
&$(k-1,0,0,2)$\\
\hline
&$(k-2,1,1,0)$\\
&$(k-2,1,0,1)$\\
$k-2$&$(k-2,0,1,1)$\\
&$(k-2,1,0,2)$\\
&$(k-2,0,1,2)$\\
\hline
\end{tabular}

\end{center}
\medskip
Hence we see that there are in total $10$ possible elements of order $k.$ These possibilities are attained provided $n\leq a-1.$ Now $n=10q_{3n}+r_{3n}$ where $r_{3n}\leq 9.$ This shows $q_{3n}\leq q.$ Moreover  $n\leq 10 q_{3n}+9<10(q_{3n}+1).$  We note $k\leq q_{3n}+2\Rightarrow k\leq q+2.$  Hence  if $k<q,$ then $q_{3n}\leq k<q$ showing $n<a.$ Hence all these possibilities are attained in this case.  However not all possiblities are attained for $k=q,q+1,q+2.$ For the case $k=q,$ we see the tuple $(k,0,0,0)$ doesn't occur if $r=0$ since then $q_{3n}<q,$ since $n<a=10q.$ However all the possibilities are retained if $r>0.$ 
For the case $k=q+1$ we note that all possibilities for $q_{3n}=k-2$ occurs since in these cases $n=10q_{3n}+r_{3n}\leq 10(k-2)+9<a.$ Moreover the case $(k,0,0,0)$ never occurs. For the cases with $q_{3n}=k-1$   we see the values of $n$ are from the set
$\{10q+6, 10q+3,10q+1,10q+2\}$ and hence the possibilities arise depending upon the value of $r.$ For example $n=10q+6$ is possible only if $r\geq 7.$ Now the table follows easily for the case $k=q+1.$ 
The case $k=q+2$ is also similar. We must note $\psi(k)=0$ for $k\geq 4.$
 Now for the cases $k=2,3$ the table follows similarly with the only exception that here $q_{3n}$ can be zero for example if we take $n=8$ and in this case $o(\mu(n))=3.$ 
The table for $k=3$ follows by including the extra possiblities $(0,0,1,2),(0,1,0,2).$ For $k=2$ we remove the cases, for which $k=q+2$ since then $q=0$ contrary to our hypothesis. But we need to include the possibilities $(0,1,1,0),(0,1,0,1),(0,0,1,1),(0,0,0,2).$ Hence the number of possibilities turns out to be $10-5+4=10-1=10+\psi(2).$ The table then follows as before.

\end{proof}
\medskip

 We are giving an example of the Ap\'{e}ry table of $\Gamma_{5}$. We use computer algebra software \cite{GAP4} to compute Ap\'{e}ry set and Ap\'{e}ry table of $\Gamma_{5}$.
\begin{example} We give below an example of Ap\'{e}ry table for $a=11,d=2.$ We here note that the Ap\'{e}ry Set is given by $\{0,24,48,39,63,87,56,80,104,95,75\}.$ We first list down the values $q_{3n},q_{2n},q_{1n},r_{1n}$ for $1\leq n\leq 10.$ \begin{center}
\begin{tabular}{ |c|c|c|c|c| c|c|c|c|c|c|} 
 \hline
 $n$&1&2& 3&4&5&6& 7&8&9&10\\  
\hline
 $q_{3n}$&0&0& 0&0&0&0& 0&0&0&1\\ 
\hline 
 $q_{2n}$&0&0& 0&0&0&1& 1&1&1&0\\  
\hline
 $q_{1n}$&0&0& 1&1&1&0& 0&0&1&0\\  
\hline
 $r_{1n}$&1&2& 0&1&2&0& 1&2&0&0\\  
\hline

\end{tabular}
\end{center}

Now we are in a position to write the Ap\'{e}ry table. \begin{center}
\begin{tabular}{ |c|c|c|c|c| c|c|c|c|c|c|c|} 
 \hline
 $Ap(\Gamma_{5})$&0&24& 48&39&63&87& 56&80&104&95&75\\  
\hline

 $Ap(M_{5})$&11&24& 48&39&63&87& 56&80&104&95&75\\  
\hline

 $Ap(2M_{5})$&22&35& 48&50&63&87& 67&91&104&95&86\\  
\hline

 $Ap(3M_{5})$&33&46& 59&61&74&98& 78&91&104&106&97\\  
\hline

\end{tabular}
\end{center}
\end{example}
\medskip
\begin{theorem}\label{aperytable}  Let $\mathrm{AT}(\Gamma_{5},a)$ denote the Ap\'{e}ry table for 
 $\Gamma_{5}$. Then $\mathrm{AT}(\Gamma_{5},a)$ will be of order 
 $(\lfloor\frac{a}{10}\rfloor+2) \times a$.   Let $\omega_{st}$ be the 
 $(s,t)$ entry of the table $\mathrm{AT}(\Gamma_{5},a)$. Then, 
 $$\omega_{st} =\mu(t-1)a+(t-1)d,\quad \mathrm{if}\,\, s\leq o(\mu(t-1)a+(t-1)d)+1,$$ and if $s> o(\mu(t-1)a+(t-1)d)+1,$ then 
 $$\omega_{st}=\mu(t-1)a+(t-1)d +[s-o(\mu(t-1)a+(t-1)d)-1]a $$
 
 Hence the reduction number of $r_{I}(\mathfrak{m})$ is $\lfloor\frac{a}{10}\rfloor+1$.
\end{theorem}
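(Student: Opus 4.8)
The approach I would take is to compute the Apéry table straight from its inductive definition, feeding in the explicit shape of $\mathrm{Ap}(\Gamma_{5},a)$ and the orders isolated in Lemma \ref{number}. Recall $\mathrm{Ap}(\Gamma_{5},a)=\{\phi(n)\mid 0\le n\le a-1\}$ with $\phi(0)=0$; each $\phi(n)$ equals $r_{1n}s_{2}+q_{1n}s_{3}+q_{2n}s_{4}+q_{3n}s_{5}$ (or the twisted form $q_{1n}s_{3}+(q_{2n}+2)s_{4}+(q_{3n}-1)s_{5}$), and this is the \emph{unique} expression of $\phi(n)$ in the minimal generators by Lemma \ref{uniqueexp}; also $o(\phi(n))$ is the case-expression computed inside the proof of Lemma \ref{number}. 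Indexing the $t$-th column by $\phi(t-1)$ (whose residue class mod $a$ is $(t-1)d$, and these are distinct since $\gcd(a,d)=1$), the whole table is pinned down once one knows, for every $\omega\in\mathrm{Ap}(\Gamma_{5},a)$ and every $\ell\ge 0$, the order of $\omega+\ell a$ and which $jM$ contain it.

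The engine is the identity
\[
o(\omega+\ell a)=o(\omega)+\ell\qquad(\omega\in\mathrm{Ap}(\Gamma_{5},a),\ \ell\ge 0),
\]
equivalently $\omega+\ell a\in(o(\omega)+\ell)M\setminus(o(\omega)+\ell+1)M$. The inclusion is trivial: append $\ell$ copies of $s_{1}=a$ to a longest factorization of $\omega$. For the non-membership I would induct on $\ell$: given $\omega+\ell a=\sum_{i}C_{i}s_{i}$ of length $L=\sum_{i}C_{i}$, if $C_{1}\ge 1$ strip one $s_{1}$ and apply the inductive hypothesis to $\omega+(\ell-1)a$ to get $L-1\le o(\omega)+\ell-1$; the only remaining case is $C_{1}=0$, i.e.\ a factorization of $\omega+\ell a$ avoiding $s_{1}$, where I would reduce the relation $\sum_{i\ge 2}C_{i}s_{i}=\omega+\ell a$ modulo $a$, use $\gcd(a,d)=1$ to control $C_{2}+3C_{3}+6C_{4}+10C_{5}$, and then compare $a$-coefficients against the unique Apéry expansion of $\omega$ — exactly the style of argument used in Lemmas \ref{equals} and \ref{penul} — to force $L\le o(\omega)+\ell$. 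This bounded case analysis on $s_{1}$-free factorizations is where I expect the real work to lie, together with the small-$a$ boundary care coming from the constraint $n\le a-1$.

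Granting this, the recursion for $\mathrm{AT}(\Gamma_{5},a)$ unwinds mechanically. One has $\omega_{1,t}=\phi(t-1)$, and (using $\phi(t-1)\in jM\iff o(\phi(t-1))\ge j$, immediate from uniqueness of the expansion) $\omega_{s+1,t}=\omega_{s,t}$ precisely while $s\le o(\phi(t-1))$ and $\omega_{s+1,t}=\omega_{s,t}+a$ once $s>o(\phi(t-1))$; unwinding gives exactly $\omega_{st}=\mu(t-1)a+(t-1)d$ for $s\le o(\mu(t-1)a+(t-1)d)+1$ and $\omega_{st}=\mu(t-1)a+(t-1)d+[s-o(\mu(t-1)a+(t-1)d)-1]a$ otherwise. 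Every row after the first index $s$ with $s-1\ge\max_{n}o(\phi(n))$ is the previous row shifted by $a$ in all columns, and no earlier row is, so the displayed table has $\max_{n}o(\phi(n))+1$ rows and $r_{I}(\mathfrak{m})=\max_{n}o(\phi(n))$ by the reduction-number criterion recalled in Section 4 (alternatively, uniqueness of Apéry expansions forces the tangent cone $G_{\mathfrak{m}}$ to be Cohen--Macaulay, whence this interpolation shape of the table and the value of $r_{I}(\mathfrak{m})$ are standard). Finally, reading the $t_{k}$-table of Lemma \ref{number} gives $\max_{n}o(\phi(n))=\lfloor a/10\rfloor+1$, yielding the size $(\lfloor a/10\rfloor+2)\times a$ and $r_{I}(\mathfrak{m})=\lfloor a/10\rfloor+1$. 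A secondary point needing care here is extracting the exact top order from that table, because of the $\psi$-correction and the $r$-dependence of the $k=q+1$ and $k=q+2$ rows.
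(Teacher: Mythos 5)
Your overall architecture is sound and is in fact more careful than the paper's own one\-/line argument: the identity $o(\omega+\ell a)=o(\omega)+\ell$ for $\omega\in\mathrm{Ap}(\Gamma_{5},a)$ is exactly what ``the rest is just the definition of the Ap\'{e}ry table'' silently uses, and your plan of proving it by stripping copies of $s_{1}$ and handling the $s_{1}$-free factorizations with the machinery of Lemmas \ref{equals} and \ref{penul} is the right one (note, though, that this hard case is only promised, not carried out; it is the analogue of the $k\geq 1$ analysis inside Lemma \ref{penul} and must actually be done). Granting it, your conclusions that each column increases by exactly $a$ per row once it leaves its constant streak, that the table has $\max_{n}o(\phi(n))+1$ rows, and that $r_{I}(\mathfrak{m})=\max_{n}o(\phi(n))$, are correct, and this is essentially the content the paper's proof suppresses.

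The genuine gap is the final numerical step. You assert that Lemma \ref{number} yields $\max_{n}o(\phi(n))=\lfloor a/10\rfloor+1$; it does not. Writing $a=10q+r$, the $k=q+2$ row of that table has $t_{q+2}>0$ whenever $r\geq 5$, and also whenever $q=1$ (via $\psi(3)=2$), so in those cases the Ap\'{e}ry set contains elements of order $q+2$. Concretely, for $a=25$, $d=1$ the element $\phi(24)=15a+24d$ has the unique factorization $s_{2}+s_{3}+2s_{5}$ of length $4=q+2$, and since $\phi(24)-a\notin\Gamma_{5}$ one gets $(q+2)M\neq a+(q+1)M$, hence $r_{I}(\mathfrak{m})\geq q+2$; your own (correct) principle then forces a $(q+3)\times a$ table rather than $(q+2)\times a$. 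The paper's own example $a=11$, $d=2$ already displays four rows where the statement predicts three. So the step ``reading the $t_{k}$-table gives top order $q+1$'' fails for $r\geq 5$ and for $11\leq a\leq 19$: as written your argument establishes the statement only for $a\geq 20$ with $r\leq 4$, and in the remaining cases it actually contradicts the claimed size and reduction number. (The paper's proof instead asserts the maximum order is $q+2$ in every case and equates that with the number of rows, which is inconsistent with the relation ``rows $=$ max order $+1$'' that you correctly use; your route exposes this discrepancy but does not resolve it.) To close the proof you must either restrict the range of $a$ or redo the top-order extraction case by case and adjust the asserted table size and reduction number accordingly.
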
 
 
\proof 

By Lemma \ref{number} we see that the maximum order of an element in the Ap\'{e}ry set is $q+2$ or $\lfloor\frac{a}{10}+2\rfloor$ and this maximum is achieved for every case. Hence the order of the table is given by  $(\lfloor\frac{a}{10}\rfloor+2) \times a.$ The rest is just the definition of Ap\'{e}ry table.
\qed 
 \bigskip
 
We take some definitions from \cite{cz2}. Let $W =\{a_{0},\ldots,a_{n}\}$ be a set of integers. We call it a \textit{ladder} if $a_{0}\leq\ldots\leq a_{n}$. Given a ladder, we say that a subset $L=\{a_{i},\ldots,a_{i+k}\}$, with $k\geq 1$, is a \textit{landing} of length $k$ if $a_{i-1}<a_{i}=\cdots=a_{i+k}<a_{i+k+1}$ (where $a_{-1}= -\infty$ and $a_{n+1}=\infty$). In this case, $s(L)=i$ and $e(L)=i+k$. A landing $L$ is said to be a \textit{true landing} if $s(L)\geq 1$. Given two landings $L$ and $L^{'}$, we set $L<L^{'}$ if $s(L)<s(L^{'})$. Let $p(W)+1$ be the number of landings and assume that $L_{0}<\cdots<L_{p(W)}$ are the distinct landings. Then we define the following numbers:
$s_{j}(W)=s(L_{j})$, $e_{j}(W)=e(L_{j})$, for each $0\leq j\leq p(W)$;
$c_{j}(W)=s_{j}(W)-e_{j-1}(W)$, for each $0\leq j\leq p(W)$.
\medskip

Suppose $\Gamma$ be a numerical semigroup minimally 
generated by $a_{1}<\cdots <a_{e}$ and $\mathfrak{m}$ be the maximal ideal of $k[[t^{a_{1}},\ldots t^{a_{e}}]]$. Let $r= r_{(t^{a_{1}})}(\mathfrak{m})$,  $M=\Gamma\setminus\{0\}$ and 
$\mathrm{Ap}(nM)=\{\omega_{n,0},\ldots\omega_{n,a_{1}-1}\}$ for $0\leq n \leq r$. For every $1\leq i\leq a_{1}-1$, consider the ladder of the values $W^{i}=\{\omega_{n,i}\}_{0\leq n\leq r}$ and define the following integers:
\begin{enumerate}[(i)]
\item $p_{i}=p(W^{i})$
\item $d_{i}=e_{p_{i}}(W^{i})$
\item $b_{j}^{i}=e_{j-1}(W^{i})$ and 
$c_{j}^{i}=c_{j}(W^{i})$, for $1\leq j\leq p_{i}$.
\end{enumerate}
\medskip

\begin{theorem}\textbf{(Cortadellas, Zarzuela.)}\label{tangentcone} With the above notations, 
$$G_{\mathfrak{m}}\cong F\oplus\displaystyle\bigoplus_{i=1}^{a_{1}-1}\left(F(-d_{i})\displaystyle \bigoplus_{j=1}^{p_{i}}\dfrac{F}{(({t^{a_{1}})^{*})^{c_{j}^{i}}}F}(-b_{j}^{i})\right),$$
where $G_{\mathfrak{m}}$ is the tangent cone of $\Gamma$ and $F=F((t^{a_{1}}))$ is the fiber cone.
\end{theorem}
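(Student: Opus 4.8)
The plan is to treat $G_{\mathfrak{m}}$ directly as a finitely generated graded module over the one-variable polynomial ring $F=F((t^{a_{1}}))\cong k[y]$, with $y=(t^{a_{1}})^{*}$, and to produce its indecomposable decomposition by reading off the $\mathfrak{m}$-adic orders along each residue class modulo $a_{1}$. First I would fix the $k$-basis: since the curve is monomial, $\mathfrak{m}^{n}$ is the $k$-span of $\{t^{s}\mid s\in\Gamma,\ o(s)\geq n\}$, where $o(s)=\max\{\ell\mid s\in \ell M\}$, so the leading forms $(t^{s})^{*}$ with $o(s)=n$ form a $k$-basis of $\mathfrak{m}^{n}/\mathfrak{m}^{n+1}$, and $\{(t^{s})^{*}\}_{s\in\Gamma}$ is a homogeneous $k$-basis of $G_{\mathfrak{m}}$ with $\deg(t^{s})^{*}=o(s)$. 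The module action is then the explicit rule
$$y\cdot (t^{s})^{*}=\begin{cases}(t^{s+a_{1}})^{*}&\text{if } o(s+a_{1})=o(s)+1,\\ 0&\text{if } o(s+a_{1})>o(s)+1,\end{cases}$$
since in the second case $t^{s+a_{1}}\in\mathfrak{m}^{o(s)+2}$, so its class in degree $o(s)+1$ vanishes. Note $o(s+a_{1})\geq o(s)+1$ always, because adjoining the generator $a_{1}$ to an optimal expression for $s$ yields one for $s+a_{1}$ with one more summand.

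Next I would split $G_{\mathfrak{m}}=\bigoplus_{i=0}^{a_{1}-1}G_{i}$ into $F$-submodules indexed by residue classes modulo $a_{1}$, which is a direct sum of $F$-modules because $y$ preserves residue classes. For $i=0$ the chain is $0,a_{1},2a_{1},\dots$, and a length count gives $o(ka_{1})=k$ exactly (any expression $ka_{1}=\sum m_{j}a_{j}$ has $ka_{1}\geq(\sum m_{j})a_{1}$, so $\sum m_{j}\leq k$), hence every jump is $1$ and $G_{0}\cong F$, which is the leading summand. For $1\leq i\leq a_{1}-1$ I list the class as $s_{k}=\omega_{i}+ka_{1}$ and set $o_{k}=o(s_{k})$, so $o_{k+1}\geq o_{k}+1$ by the remark above. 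I would then decompose $G_{i}$ along the maximal runs of consecutive indices on which every jump $o_{k+1}-o_{k}$ equals $1$: on such a run $y$ carries each basis vector to the next, while at a jump $\geq 2$ it kills the current vector and a new cyclic piece begins. Because $y$ sends basis vectors to basis vectors or to $0$, these cyclic pieces are $k$-linearly independent $F$-submodules, so $G_{i}$ is their direct sum. A finite run of length $\ell$ whose first element has order $b$ contributes $(F/(y^{\ell}))(-b)$, and the unique infinite run---which exists and is unique by the stabilization $(n+1)M=a_{1}+nM$ for $n\geq r$ recalled above, forcing all jumps to equal $1$ eventually---contributes a free summand $F(-b)$.

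It then remains to translate the block data into the ladder invariants, and this dictionary is the combinatorial heart of the argument. Using $\omega_{n,i}=\min\{s_{k}\mid o_{k}\geq n\}$, the value $s_{k}$ occupies the rows $n\in[o_{k-1}+1,\,o_{k}]$ (with $o_{-1}:=-1$), so $s_{k}$ is a landing of $W^{i}$ exactly when $o_{k}-o_{k-1}\geq 2$, i.e. exactly at the left end of each new jump-$1$ block, and there $s(L)=o_{k-1}+1$, $e(L)=o_{k}$. Comparing a landing $L_{j}$ (block generator in degree $o_{\kappa_{j}}=e_{j}(W^{i})$) with $L_{j+1}$ gives block length $\kappa_{j+1}-\kappa_{j}=s(L_{j+1})-e(L_{j})=c_{j+1}(W^{i})$, so the block opened at $L_{j}$ yields $(F/(y^{c_{j+1}^{i}}))(-e_{j}^{i})$; re-indexing $j\mapsto j+1$ reproduces the torsion summands $(F/(y^{c_{j}^{i}}))(-b_{j}^{i})$ with $b_{j}^{i}=e_{j-1}(W^{i})$ for $1\leq j\leq p_{i}$, while the last landing $L_{p_{i}}$ opens the infinite block and gives $F(-e_{p_{i}}(W^{i}))=F(-d_{i})$. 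Assembling over $i$ produces the asserted isomorphism. The main obstacle is precisely this bookkeeping: one must verify that for $i\geq 1$ the initial value $s_{0}=\omega_{i}$ has order $o_{0}\geq 1$, so its run already has length $\geq 2$ and is itself the first landing $L_{0}$; this is what prevents an unaccounted ``pre-block'' and makes the index ranges match $j=1,\dots,p_{i}$ plus a single free summand. The stabilization result is what guarantees finiteness of the landing list and that the terminal block is free rather than torsion.
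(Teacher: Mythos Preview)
Your argument is correct and complete: you identify $G_{\mathfrak m}$ as a graded module over the polynomial ring $F\cong k[y]$, split it along residue classes modulo $a_{1}$, decompose each class into cyclic $F$-summands according to where the order function jumps by more than one, and then carry out the bookkeeping that matches those cyclic pieces with the landing invariants $d_{i},b_{j}^{i},c_{j}^{i}$ of the Ap\'ery ladder. The verifications you flag (that $o_{0}\geq1$ so the first landing is always $L_{0}$, and that stabilisation of $(n+1)M=a_{1}+nM$ forces a unique terminal free block) are exactly the points that make the index ranges line up.

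The paper, however, does not prove this statement at all: its entire proof is the citation ``See Theorem 2.3 in \cite{cz2}'', since this is a result of Cortadellas and Zarzuela being quoted for later use. So there is no in-paper argument to compare against. What you have written is essentially a self-contained reconstruction of the Cortadellas--Zarzuela proof, carried out in the concrete monomial-curve setting where the leading-form basis $\{(t^{s})^{*}\}$ is available and the $F$-action can be read off explicitly. This is more than the paper asks for, but it is a sound and rather clean proof of the theorem.
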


\proof See Theorem 2.3 in \cite{cz2}.\qed
\medskip

\begin{corollary}\label{exptangent} The tangent cone $G_{\mathfrak{m}}$ of $\Gamma_{5}$ is a free 
$F(I)$-module. Moreover 
$$ G_{\mathfrak{m}}= \displaystyle\bigoplus_{k=0}^{\lfloor \frac{a}{10}\rfloor+1}(F(I)(-k))^{t_{k}},$$ 
where $t_{k}$'s are given in Lemma \ref{number}. 
\end{corollary}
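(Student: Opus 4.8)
The plan is to read off the corollary from the Cortadellas--Zarzuela structure theorem (Theorem~\ref{tangentcone}) by feeding it the explicit Apéry table computed in Theorem~\ref{aperytable}. The key structural input is that every column of the table is, as a ladder, of the simplest possible shape: for each $i$ with $1\le i\le a-1$ the ladder $W^{i}=\{\omega_{n,i}\}$ starts with a constant block of value $\mu(i)a+id$ and then, by the second formula in Theorem~\ref{aperytable}, becomes a strictly increasing arithmetic progression with common difference $a$. Hence $W^{i}$ has exactly one landing --- its initial plateau --- and that landing has start index $0$, so it is not a \emph{true} landing. Therefore $p_{i}=p(W^{i})=0$ for every $i$, all the cyclic torsion summands $\tfrac{F}{((t^{a})^{*})^{c^{i}_{j}}F}(-b^{i}_{j})$ in Theorem~\ref{tangentcone} disappear, and we are left with $G_{\mathfrak{m}}\cong F\oplus\bigoplus_{i=1}^{a-1}F(-d_{i})$; in particular $G_{\mathfrak{m}}$ is a free $F(I)$-module, which is the first assertion.

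First I would pin down the shifts $d_{i}$. Since the only landing of $W^{i}$ is its bottom plateau, $d_{i}=e_{0}(W^{i})$ is the last row index at which column $i$ still equals $\mu(i)a+id$; by Theorem~\ref{aperytable} this is the order $o(\mu(i)a+id)$, truncated by the height of the table (that is, by the reduction number $\lfloor a/10\rfloor+1$) in the extremal case. I would then group the summands $F(-d_{i})$ by the common value $k=d_{i}$. By Lemma~\ref{number} the number of nonzero Apéry elements of order $k$ is $t_{k}$, and the leading copy of $F=F(I)(0)$ accounts for the order-$0$ element $0\in\mathrm{Ap}(\Gamma_{5},a)$ (equivalently, for the column $\{0,a,2a,\dots\}$, which carries no plateau), so the total multiplicity of $F(I)(-k)$ is $t_{k}$ for each $0\le k\le\lfloor a/10\rfloor+1$. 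Collecting terms gives $G_{\mathfrak{m}}=\bigoplus_{k=0}^{\lfloor a/10\rfloor+1}(F(I)(-k))^{t_{k}}$.

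I expect the main point to watch --- rather than a deep obstacle --- to be this last bookkeeping, specifically the behaviour at the top degree: an Apéry element whose order exceeds the reduction number has its plateau cut off by the finite number of rows of the table, so it contributes a shift equal to $\lfloor a/10\rfloor+1$ rather than to its own order, and I would check carefully against Lemma~\ref{number} that, after this truncation, the multiplicities still assemble into the exponents $t_{k}$ exactly as stated. The genuinely structural claim --- that a column of the Apéry table, once it leaves its plateau, can never be constant again --- is already contained in Theorem~\ref{aperytable}, so beyond this the corollary is a direct assembly of Theorems~\ref{tangentcone} and~\ref{aperytable} with Lemma~\ref{number}.
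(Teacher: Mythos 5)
Your structural argument is exactly the one the paper intends (its own proof is a one-line citation of Theorems \ref{aperytable} and \ref{tangentcone}): by Theorem \ref{aperytable} each column of the Ap\'{e}ry table is a plateau equal to the Ap\'{e}ry element followed by a strictly increasing tail with step $a$, so each column has a single landing beginning at row $0$, hence no true landings, $p_{i}=0$, the torsion summands of Theorem \ref{tangentcone} disappear, $G_{\mathfrak{m}}\cong F\oplus\bigoplus_{i}F(-d_{i})$ is free, the shift $d_{i}$ is the end of the plateau, i.e.\ the order of the corresponding Ap\'{e}ry element, and grouping by order with Lemma \ref{number} yields the exponents $t_{k}$. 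Up to that point your write-up is correct and coincides with the paper's (unwritten) argument.

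The step that would fail is your handling of the top degree. You allow Ap\'{e}ry elements whose order exceeds the reduction number and propose to truncate their shift to $\lfloor a/10\rfloor+1$. This situation cannot occur: if $w\in\mathrm{Ap}(\Gamma_{5},a)$ has order $k$ and $kM=a+(k-1)M$, then $w-a\in\Gamma_{5}$, a contradiction, so the reduction number is always at least the maximal order of an Ap\'{e}ry element, and Theorem \ref{tangentcone} is applied to the table of full height $r_{I}(\mathfrak{m})+1$; no plateau is ever cut off. The genuine top-degree issue is the opposite one: by Lemma \ref{number}, $t_{q+2}\neq 0$ precisely when $r\geq 5$ (for instance $a=25$, where $\phi(24)=s_{2}+s_{3}+2s_{5}$ has order $4=q+2$), so in those cases the reduction number is $q+2$, the table has $q+3$ rows, and the decomposition must run up to $k=q+2$; the bound $q+1$ printed in Theorem \ref{aperytable} and in the corollary is off by one there. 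Your truncation would instead merge these summands into degree $q+1$, giving exponent $t_{q+1}+t_{q+2}$ and no degree-$(q+2)$ summand, which fails exactly the consistency check against Lemma \ref{number} that you propose to perform (and fails the rank count, since $\sum_{k}t_{k}=a$ must equal the $F(I)$-rank of $G_{\mathfrak{m}}$). The repair is to drop the truncation and take the top index to be the maximal order $\max\{k: t_{k}\neq 0\}$, noting that this also corrects the stated range of the sum (and of Theorem \ref{aperytable}) when $r\geq 5$.
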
  

\proof Proof follows from corollary \ref{aperytable} and \ref{tangentcone}.\qed
\medskip

\begin{corollary} The following properties hold good 
for the tangent cone $G_{\mathfrak{m}}$ of $\Gamma_{4}$ :
\begin{enumerate}
\item[(i)] $G_{\mathfrak{m}}$ is Cohen-Macaulay;
\item[(ii)] $G_{\mathfrak{m}}$ is not Gorenstein;
\item[(iii)] $G_{\mathfrak{m}}$ is Buchsbaum.
\end{enumerate}
\end{corollary}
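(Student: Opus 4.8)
The plan is to obtain all three properties from a single structural input: the tangent cone $G_{\mathfrak{m}}$ of $\Gamma_{4}$ is free over its fiber cone $F=F(I)$, where $I=\langle t^{a}\rangle$. Exactly as in the $\Gamma_{5}$ situation, here $\ell(I)=1$, so $F\cong k[T]$ is a polynomial ring in one variable (with $T=(t^{a})^{*}$) and $F\hookrightarrow G_{\mathfrak{m}}$ is a Noether normalisation. First I would record the $\Gamma_{4}$-analog of Corollary \ref{exptangent}, namely a decomposition
$$G_{\mathfrak{m}}=\bigoplus_{k\ge 0}\bigl(F(-k)\bigr)^{t_{k}},$$
where $t_{k}$ denotes the number of elements of $\mathrm{Ap}(\Gamma_{4},a)$ of order $k$. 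This comes either from Theorem \ref{tangentcone} applied to the Ap\'{e}ry table of $\Gamma_{4}$, or directly from the corresponding computation in \cite{pss1}, which carries out the embedding dimension $4$ case.

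For part (i), the displayed decomposition exhibits $G_{\mathfrak{m}}$ as a finite free module over the regular one-dimensional ring $F\cong k[T]$; hence $G_{\mathfrak{m}}$ is a maximal Cohen--Macaulay $F$-module, so $\operatorname{depth} G_{\mathfrak{m}}=\dim G_{\mathfrak{m}}=1$ and $G_{\mathfrak{m}}$ is Cohen--Macaulay. Part (iii) is then purely formal: a Cohen--Macaulay graded (or local) ring is automatically Buchsbaum, so (iii) follows from (i) with nothing further to prove.

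The real content is part (ii). I would compute the Cohen--Macaulay type of $G_{\mathfrak{m}}$ and show it is at least $2$. Since $T$ is a homogeneous nonzerodivisor and a system of parameters, the type equals $\dim_{k}\operatorname{soc}\bigl(G_{\mathfrak{m}}/T\,G_{\mathfrak{m}}\bigr)$, and by the decomposition the Artinian reduction is the standard graded algebra $G_{\mathfrak{m}}/T\,G_{\mathfrak{m}}$ whose Hilbert function is $(t_{k})_{k\ge 0}$. The top-degree component always lies in the socle, so the type is at least $t_{k_{\max}}$, where $k_{\max}$ is the maximal order occurring; I would then read off from the $\Gamma_{4}$ order-distribution in \cite{pss1} (the analog of Lemma \ref{number}) either that $t_{k_{\max}}\ge 2$ or that an additional socle element occurs in a strictly lower degree. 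Since Gorensteinness of a Cohen--Macaulay ring is equivalent to type $1$, any such witness shows $G_{\mathfrak{m}}$ is not Gorenstein.

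The main obstacle is precisely this socle count in (ii): it requires the explicit order-distribution $(t_{k})$ of $\mathrm{Ap}(\Gamma_{4},a)$ and a verification that the Artinian reduction carries two $k$-linearly independent socle elements. Everything else --- freeness $\Rightarrow$ Cohen--Macaulay, and Cohen--Macaulay $\Rightarrow$ Buchsbaum --- is formal and follows from the structural decomposition together with standard homological facts.
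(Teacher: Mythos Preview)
Your outline is sound, but note that the paper gives no argument here at all: its entire proof is the single line ``Same as in Corollary~6.8 in \cite{pss1}.'' So there is nothing to compare against beyond the citation itself. (The ``$\Gamma_{4}$'' in the statement is almost certainly a typographical slip for $\Gamma_{5}$, since the corollary sits immediately after Corollary~\ref{exptangent}; the reference to \cite{pss1} is then an appeal to the parallel $\Gamma_{4}$ argument. Your reading it literally as $\Gamma_{4}$ does no harm, as your scheme works verbatim in either case.)

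What you sketch---freeness of $G_{\mathfrak m}$ over $F\cong k[T]$ giving Cohen--Macaulayness, the trivial implication Cohen--Macaulay $\Rightarrow$ Buchsbaum, and non-Gorensteinness via a type computation on the Artinian reduction $G_{\mathfrak m}/T\,G_{\mathfrak m}$---is exactly the argument one expects the citation to encode. The only place you leave real work is (ii): you still need to produce two independent socle elements, or equivalently observe that the Hilbert function $(t_k)$ of the Artinian reduction is not symmetric (here $t_0=1$ while $t_1=e(\Gamma)-1\ge 3$, so symmetry would force $t_{k_{\max}}=1$ and $t_{k_{\max}-1}=t_1$, which one checks fails from the explicit $t_k$ table). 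That verification is precisely the step the paper offloads to \cite{pss1}, so you are in the same position as the paper itself, just more explicit about what remains.
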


\proof Same as in Corollary $6.8$ in \cite{pss1} \qed
\medskip

\begin{corollary} Let $HG_{\mathfrak{m}}(x)$ be the Hilbert series of $G_{\mathfrak{m}}$. Then $$HG_{\mathfrak{m}}(x)=\displaystyle\left(\sum_{k=0}^{\lfloor \frac{a}{10}\rfloor+1} t_{k}x^{k}\right)/(1-x).$$
Where $t_{k}$'s are given in Lemma \ref{number}. 
\end{corollary}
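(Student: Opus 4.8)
The plan is to feed the explicit module decomposition of Corollary \ref{exptangent} into the additivity of Hilbert series, once we know the Hilbert series of the fiber cone $F = F(I)$. Since $I = \langle t^{a}\rangle$ is principal and generated by a nonzerodivisor of $R$, the Rees algebra $R[It]$ is a polynomial ring $R[T]$ in one variable over $R$; tensoring with $R/\mathfrak{m} \cong k$ gives $F(I) \cong k[T]$, standard graded in the variable $T = (t^{a})^{*}$ of degree $1$. Equivalently, the text records $\ell(I) = 1$ and $\mu(I) = 1$, so $F(I)$ is a one-dimensional standard graded $k$-algebra generated by a single degree-one element, hence forced to be $k[T]$. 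Consequently $H_{F(I)}(x) = 1/(1-x)$, and a degree shift multiplies this by the corresponding power of $x$, so $H_{F(I)(-k)}(x) = x^{k}/(1-x)$.

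Next I would use that $HG_{\mathfrak{m}}(x)$ is the sum of the Hilbert series of the summands in Corollary \ref{exptangent}. Each shifted copy $F(I)(-k)$ occurs with multiplicity $t_{k}$ (with $t_{0} = 1$ coming from $0 \in \mathrm{Ap}(\Gamma_{5},a)$ and $t_{1} = 4$, as in Lemma \ref{number}), so it contributes $t_{k}x^{k}/(1-x)$ to the Hilbert series. Summing over $0 \leq k \leq \lfloor a/10 \rfloor + 1$ then yields
$$HG_{\mathfrak{m}}(x) = \sum_{k=0}^{\lfloor a/10\rfloor+1}\frac{t_{k}x^{k}}{1-x} = \left(\sum_{k=0}^{\lfloor a/10\rfloor+1}t_{k}x^{k}\right)/(1-x),$$
which is the asserted formula.

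I do not expect a genuine obstacle here: all the combinatorial and homological content has already been absorbed into Corollary \ref{exptangent}, which itself rests on the Cortadellas--Zarzuela decomposition (Theorem \ref{tangentcone}) together with the Ap\'{e}ry-table computation (Theorem \ref{aperytable} and Lemma \ref{number}). The only points worth a sentence of care are the identification $F(I) \cong k[T]$, giving $H_{F(I)}(x) = 1/(1-x)$, and matching the grading normalization so that the unshifted summand corresponds to $k = 0$; both are immediate from what precedes.
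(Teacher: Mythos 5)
Your proposal is correct and follows the same route as the paper, which deduces the corollary directly from the free decomposition $G_{\mathfrak{m}}\cong \bigoplus_{k}(F(I)(-k))^{t_{k}}$ of Corollary \ref{exptangent}; your additional remarks identifying $F(I)\cong k[T]$ with Hilbert series $1/(1-x)$ and fixing the grading normalization simply make explicit what the paper leaves implicit.
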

\proof Follows from Corollay \ref{exptangent}.\qed
\medskip

\begin{conjecture}\label{uniquecon}  The Ap\'{e}ry set $\mathrm{Ap}(\Gamma_{m},a)$ have unique expression. Hence the tangent cone of $k[[\Gamma_{m}]]$ is Cohen-Macaulay.
\end{conjecture}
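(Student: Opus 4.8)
This last statement is a conjecture, so what follows is a proposed line of attack rather than a proof; the natural route is to extend, from $m=5$ to general $m$, the three--part scheme used above: produce a canonical expansion for each residue class mod $a$, show these canonical expansions sweep out the whole Ap\'{e}ry set, and show no element admits a second expansion. The Cohen--Macaulay clause is then a consequence of the uniqueness clause, so I first explain that reduction and then concentrate on uniqueness. \textbf{Reduction of the Cohen--Macaulay assertion.} Assume every $\omega\in\mathrm{Ap}(\Gamma_m,a)$ has a unique expansion $\omega=\sum_{i=2}^{m}c_i s_i$. Then $o(\omega)=\sum_i c_i$, and one checks (as for $m=5$) that $\omega\notin(o(\omega)+1)M$ while $\omega+a\in(o(\omega)+1)M$: any length-$(o(\omega)+1)$ factorization of $\omega$ into elements of $M$ either uses $s_1$, contradicting $\omega\in\mathrm{Ap}(\Gamma_m,a)$, or does not, giving a second expansion. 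Hence, as in Theorem~\ref{aperytable}, the $\omega$-column of the Ap\'{e}ry table is constant up to row $o(\omega)+1$ and then increments by $a$, so every ladder $W^i$ has exactly one landing and no true landing. Feeding this into Theorem~\ref{tangentcone} (equivalently, into the description of \cite{rs1}) yields, as in Corollary~\ref{exptangent}, that $G_{\mathfrak m}\cong\bigoplus_{k}(F(I)(-k))^{t_k}$ is a free $F(I)$-module, hence Cohen--Macaulay. So the conjecture reduces to the uniqueness of Ap\'{e}ry expansions.

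\textbf{Canonical form and exhaustion (the analogues of Lemmas~\ref{guide} and~\ref{penul}).} The $d$-weights of $s_1,\dots,s_m$ are $0,1,3,6,\dots,\binom m2$. Given $1\le n\le a-1$, divide $n$ successively by $\binom m2,\binom{m-1}2,\dots,3,1$; since $\binom i2/\binom{i-1}2=i/(i-2)<3$, every quotient obtained after the first division is at most $2$, so only the leading quotient (bounded by $n/\binom m2$) can be large --- exactly the $q_{3n}$ phenomenon for $m=5$. This yields a candidate $\phi_m(n)=\mu_m(n)a+nd$, where $\mu_m(n)$ is the associated $a$-weight together with corrections of the $r_{1n}=2$ type, forced by the relations $g_7,g_4,\dots$ and their higher-embedding analogues. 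One then shows $\{\phi_m(n)\}\cup\{0\}\subseteq\mathrm{Ap}(\Gamma_m,a)$; this is where the hypothesis ``$a$ large (in terms of $m$)'' is used, to rule out that increasing the $d$-degree by $a$ ever lowers the semigroup element, a weight count giving that such an increase costs at least $2a/(m-1)$ in $a$-degree. Since the $\phi_m(n)$ realise $a$ incongruent classes mod $a$, they are exactly the Ap\'{e}ry set.

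\textbf{No collisions, and the main obstacle.} If $\sum c_i s_i=\sum c_i' s_i$ are two expansions of the same Ap\'{e}ry element, set $a_i=c_i-c_i'$; matching $a$- and $d$-weights gives
$$\sum_{i=2}^{m} i\,a_i=0,\qquad \sum_{i=2}^{m}\binom i2\,a_i=0,$$
while the canonical-form bounds give $a_i\ge-\delta_i$ with explicit small $\delta_i$. The crux is the general analogue of Lemma~\ref{equals}: \emph{the only integer solution of this system with $a_i\ge-\delta_i$ is the zero vector.} The two equations define a rank-$(m-3)$ lattice, and one must show it contains no nonzero ``almost nonnegative'' vector. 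I expect this to be the main obstacle: for $m=5$ it was a two-case check, but for larger $m$ the lattice has higher rank and more short vectors, so one needs either a convexity/extremality argument (for instance, that any such $a$ must have a single negative coordinate, which would force it through the straightening relations $g_3,\dots,g_7$ and their analogues) or a careful choice of canonical form in the previous step that keeps the $\delta_i$ uniformly small in $m$. A secondary, more clerical obstacle is that --- just as the $m=5$ defining ideal and Ap\'{e}ry table split into ten families indexed by $a\bmod 10$ --- here the canonical form, the numbers $t_k$, and the collision analysis will split according to $a\bmod\binom m2$ together with finitely many exceptional pairs $(a,d)$, so the argument has to be run from a uniform template rather than case by case.
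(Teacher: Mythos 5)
This statement is a conjecture in the paper, so there is no proof of it for you to be compared against; and your text is, as you say yourself, a programme rather than a proof, so after it the conjecture is still open. The decisive steps are exactly the ones you postpone. First, the collision analysis: the general analogue of Lemma~\ref{equals}, namely that the rank-$(m-3)$ lattice cut out by $\sum_{i\ge 2} i\,a_i=0$ and $\sum_{i\ge 2}\binom{i}{2}a_i=0$ contains no nonzero vector obeying the canonical-form bounds $a_i\ge -\delta_i$, is stated but not proved, and you give no mechanism that keeps the $\delta_i$ under control as $m$ grows. Note also that your two equations presuppose that the two expansions have the same $d$-weight; ruling out a shift of the $d$-weight by a multiple of $a$ is itself a Lemma~\ref{penul}-type estimate, which has to be carried out uniformly in $m$. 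Second, the canonical form is not pinned down: your expectation that the corrections are only ``of the $r_{1n}=2$ type'' is already too optimistic for $m=6$, where the paper's Conjecture~\ref{apery6con} needs corrections by $1$ and by $3$ on the exceptional set $S=\{12,\,20+15k,\,23+15k,\,27+15k\}$. Since the $\delta_i$ entering the lattice lemma are dictated by that canonical form, the two open steps are coupled, and this is precisely where the difficulty of the conjecture sits.

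Your reduction of the Cohen--Macaulay clause to uniqueness is in the same spirit as the paper's $m=5$ treatment (Theorem~\ref{aperytable}, Theorem~\ref{tangentcone}, Corollary~\ref{exptangent}), but as written it has a gap. To exclude a true landing you must show $\mathrm{ord}(\omega+ka)=\mathrm{ord}(\omega)+k$ for every $\omega\in\mathrm{Ap}(\Gamma_m,a)$ and $k\ge 1$. A factorization of $\omega+a$ of length $\mathrm{ord}(\omega)+2$ using $s_1$ is indeed impossible (drop the $s_1$ and the order of $\omega$ is exceeded), but one avoiding $s_1$ does not yield a second expression of $\omega$, and it is not forbidden by uniqueness of expressions of Ap\'ery elements, because $\omega+a$ is not itself in the Ap\'ery set. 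For $m=5$ the paper obtains freeness of $G_{\mathfrak m}$ over $F(I)$ from the explicitly computed Ap\'ery table, not from an abstract implication; if you want ``uniqueness $\Rightarrow$ Cohen--Macaulay'' as a formal step, you must either prove that implication or invoke a precise criterion (for instance the ladder criterion of \cite{cz2}) and verify its hypotheses from uniqueness. In short, your plan correctly mirrors the $m=5$ template and correctly identifies the main obstacle, but neither the exhaustion step nor the no-collision lemma is established, so the conjecture is not settled by this proposal.
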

\begin{notation}
 We make the following notations, for $n\in\mathbb{N},$ 
\begin{itemize}
\item $n=15s_{4n}+t_{4n}, $  \  $0\leq t_{4n}\leq 14$
\item $t_{4n}=10s_{3n}+t_{3n}, $  \  $0\leq t_{3n}\leq 9$
\item $t_{3n}= 6s_{2n}+t_{2n}, $  \  $0\leq t_{2n}\leq 5$
\item $t_{2n}=3s_{1n}+t_{1n}, $  \  $0\leq t_{1n}\leq 2$
\item $S=\{12, 20+15k,23+15k,27+15k \mid k\geq 0\}$
\item \begin{align*}
\nu(n)&= 2t_{1n}+3s_{1n}+4s_{2n}+5s_{3n}+6s_{4n},\quad n\notin S\\
&= 2t_{1n}+3s_{1n}+4s_{2n}+5s_{3n}+6s_{4n}-1  \quad n\in S\setminus\{ 20+15k\mid  k\geq 0\} \\
&= 2t_{1n}+3s_{1n}+4s_{2n}+5s_{3n}+6s_{4n}-3,\quad \mathrm{ otherwise.}
\end{align*}
\end{itemize}
\end{notation}
\begin{conjecture}\label{apery6con}
 The Ap\'{e}ry set $\mathrm{Ap}(\Gamma_{6},a)$ will be given by $$\mathrm{Ap}(\Gamma_{6},a)=\{\nu(n)a+nd\mid n\in \{1,\ldots a-1\}\}\cup\{0\}.$$  
\end{conjecture}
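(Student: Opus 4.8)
The plan is to transcribe, step for step, the argument that produced $\mathrm{Ap}(\Gamma_5,a)$ in Section~\ref{cow}, with the three–term greedy decomposition of $n$ used there replaced by the five–term decomposition $n = 15 s_{4n} + 10 s_{3n} + 6 s_{2n} + 3 s_{1n} + t_{1n}$ fixed in the notation above (so $s_{3n}, s_{2n}, s_{1n}\in\{0,1\}$ and $t_{1n}\in\{0,1,2\}$). Concretely one needs three facts: (a) $\nu(n)a + nd \in \Gamma_6$ for $1 \le n \le a-1$; (b) $\nu(n)a + nd - a \notin \Gamma_6$, i.e. each $\nu(n)a + nd$ lies in $\mathrm{Ap}(\Gamma_6,a)$; and (c) a counting argument closing the gap between ``$\subseteq$'' and ``$=$''.

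Step (a) is the analogue of Lemma~\ref{guide}. Off the set $S$ it is immediate, since by construction $\nu(n)a + nd = t_{1n}s_2 + s_{1n}s_3 + s_{2n}s_4 + s_{3n}s_5 + s_{4n}s_6$. For $n\in S$ the value of $\nu$ is lowered by $1$ or $3$, and one must exhibit a cheaper representation; this is where the single identity $2 s_4 + a = s_5 + 2 s_2$ (equivalently $2(4a+6d) + a = (5a+10d) + 2(2a+d)$) and related identities involving $s_5$ and $s_6$ enter, and they are exactly what dictate the corrections $-1$, $-1$, $-3$ in the definition of $\nu$ on the families comprising $S$. Step (c) is then cheap once (b) is known: because $\gcd(a,d)=1$, the residues $nd \bmod a$ for $0 \le n \le a-1$ are pairwise distinct, so the set $\{\nu(n)a + nd \mid 1\le n\le a-1\}\cup\{0\}$ has $a$ elements in pairwise distinct classes modulo $a$, while $\mathrm{Ap}(\Gamma_6,a)$ also has exactly $a$ elements; containment forces equality. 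The uniqueness of expressions (the analogue of Lemma~\ref{uniqueexp}) then comes for free from the same classification, specialized to the homogeneous case.

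The real work is (b), and it rests on a five–variable replacement for Lemma~\ref{equals}. Suppose $x_2 s_2 + \cdots + x_6 s_6 = \nu(n)a + nd - a$ with $x_i \ge 0$, and put $a_1 = x_2 - t_{1n}$, $a_2 = x_3 - s_{1n}$, $a_3 = x_4 - s_{2n}$, $a_4 = x_5 - s_{3n}$, $a_5 = x_6 - s_{4n}$. Comparing $d$–parts gives a homogeneous relation $a_1 + 3a_2 + 6a_3 + 10a_4 + 15a_5 = ka$ with $k \ge 0$, together with the bounds $a_1\ge -2$ and $a_2,a_3,a_4\ge -1$ (from $t_{1n}\le 2$, $s_{1n},s_{2n},s_{3n}\le 1$, cf. the remark after the notation), while comparing $a$–parts gives a cost inequality $2a_1 + 3a_2 + 4a_3 + 5a_4 + 6a_5 \le -1$ (with the appropriate shift when $n\in S$). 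One first discards $k\ge 1$ for $a$ large, exactly as in Lemma~\ref{penul}, by estimating $2a_1 + 3a_2 + 4a_3 + 5a_4 + 6a_5$ against $a_1 + 3a_2 + 6a_3 + 10a_4 + 15a_5 - ka$; this pins down a threshold on $a$ below which the statement must be read with care, in the spirit of the hypotheses $a\ge 7$ for $\Gamma_4$ and $a\ge 11$ for $\Gamma_5$. For $k = 0$ one is left with a purely Diophantine classification: all $(a_1,\dots,a_5)\ne 0$ satisfying $a_1 + 3a_2 + 6a_3 + 10a_4 + 15a_5 = 0$, the sign bounds above, and cost $\le 0$, must be shown to be exactly the rewritings $2 s_4 \leftrightarrow s_5 + 2 s_2$ and its two $s_6$–combinations, which is precisely what occurs when $n\in S$.

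The main obstacle is this last classification: with five variables the case tree is far larger than the two cases of Lemma~\ref{equals}. The parity constraint $a_1\equiv a_2\pmod 2$ still follows from the relation, and I would organize the proof by first eliminating one variable (say $a_3$) via the relation, then branching on $a_5$ and $a_4$, using the cost inequality to bound the remaining variables so each branch becomes a finite check. Before committing to this, I would run the computer–algebra experiments already used in the paper to confirm that $S$ is correct and complete and to read off the exact lower bound on $a$. A smaller, independent preliminary is the analogue of Theorem~\ref{min4}: determining for which $a$ the set $\{s_1,\dots,s_6\}$ minimally generates $\Gamma_6$, since the asserted description of $\mathrm{Ap}(\Gamma_6,a)$ is implicitly made on that range.
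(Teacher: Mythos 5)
The statement you are addressing is presented in the paper as Conjecture \ref{apery6con}: the authors give only the notation for $\nu$ and the set $S$ and offer no proof, so there is nothing on the paper's side to compare against, and the honest question is whether your text itself constitutes a proof. It does not. You have correctly identified the route the paper's Section \ref{cow} suggests --- transcribe Lemmas \ref{guide}, \ref{penul} and \ref{uniqueexp} with the decomposition $n=15s_{4n}+10s_{3n}+6s_{2n}+3s_{1n}+t_{1n}$, reduce membership in the Ap\'{e}ry set to a homogeneous Diophantine classification, and finish by the residue-counting argument --- but every step that carries actual mathematical weight is left open, and you say so yourself.

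Concretely, three things are missing. First, the five-variable analogue of Lemma \ref{equals}, i.e.\ the classification of all nonzero tuples $(a_1,\ldots,a_5)$ with $a_1+3a_2+6a_3+10a_4+15a_5=0$, the sign bounds $a_1\ge -2$, $a_2,a_3,a_4\ge -1$, $a_5\ge 0$, and cost $2a_1+3a_2+4a_3+5a_4+6a_5\le 0$, is precisely the content that would turn the conjecture into a theorem; you sketch a branching strategy but do not execute it, nor do you verify that its solution set matches the corrections $-1$ and $-3$ built into $\nu$ on $S$ (for instance that $n=12$ is handled by $2s_4=8a+12d$, and that the $15$-periodic families $20+15k$, $23+15k$, $27+15k$ are exactly the exceptional residue patterns and no others occur). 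Second, step (a) for $n\in S$ requires exhibiting the cheaper representations explicitly; you gesture at ``related identities involving $s_5$ and $s_6$'' without producing them. Third, the threshold on $a$ --- both the analogue of Theorem \ref{min4} for minimal generation of $\Gamma_6$ and the bound needed to discard $k\ge 1$ as in Lemma \ref{penul} --- is flagged but not determined, and deferring to computer experiments ``to confirm that $S$ is correct and complete'' concedes that the statement is not yet established. In short, your proposal has the same epistemic status as the paper's conjecture: a plausible and well-aimed plan, not a proof; closing it requires proving the five-variable lemma, writing down the explicit representations for $n\in S$, and fixing the exact range of $a$ for which the description of $\mathrm{Ap}(\Gamma_6,a)$ holds.
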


 \bibliographystyle{amsalpha}

\end{document}